\def \ds{\displaystyle}
\def \wt{\widetilde}
\def \R{\mathbb{R}}
\def \C{\mathbb{C}}
\def \Z{\mathbb{Z}}
\def \v{\varphi}
\def \e{\varepsilon}
\def \l{\lambda}
\def \d{\delta}
\def \pa{\partial}
\def \n{\nabla}
\def \s{\sigma}
\def \a{\alpha}
\def \b{\beta}
\def \me{\mathcal{E}}
\def \n{\nabla}
\def \t{\theta}
\def \P{\Phi}
\newcommand{\re}{\operatorname{Re}}
\newcommand{\I}{\infty}
\newcommand{\norm}[1]{\left\lVert #1\right\rVert}
\newcommand{\Lebn}[2]{\left\lVert #1 \right\rVert_{L(#2)}}
\newcommand{\Wsn}[2]{\left\lVert #1 \right\rVert_{\dot{W}^{s_2}(#2)}}
\newcommand{\Wsin}[2]{\left\lVert #1 \right\rVert_{\dot{W}^{\s}(#2)}}
\newcommand{\tnorm}[1]{\lVert #1\rVert}
\def\({\left(}
\def\){\right)}
\def\<{\left\langle}
\def\>{\right\rangle}
\def\le{\leqslant}
\def\ge{\geqslant}
\def\ol{\overline}
\DeclareMathOperator{\Lip}{Lip}
\newcommand{\todayd}{\the\year/\the\month/\the\day}
\theoremstyle{plain}
\newtheorem{thm}{Theorem}[section]
\newtheorem{pro}[thm]{Proposition}
\newtheorem{lem}[thm]{Lemma}
\newtheorem{rem}[thm]{Remark}
\theoremstyle{definition}
\newtheorem{dei}[thm]{Definition}
\begin{document}
\title[Global behavior of solutions to GGP equation]{Global behavior of solutions to generalized Gross-Pitaevskii equation}
\author[S. Masaki]{Satoshi MASAKI}
\address[]{Division of Mathematical Science, Department of Systems Innovation, Graduate School of Engineering Science, Osaka University, Toyonaka, Osaka, 560-8531, Japan}
\email{masaki@sigmath.es.osaka-u.ac.jp}
\author[H. Miyazaki]{Hayato MIYAZAKI}
\address[]{Advanced Science Course, Department of Integrated Science and Technology, National Institute of Technology, Tsuyama College, Tsuyama, Okayama, 708-8509, Japan}
\email{miyazaki@tsuyama.kosen-ac.jp}
\keywords{Gross-Pitaevskii equation, nonlinear Schr{\"o}dinger equations, scattering, non-vanishing boundary condition}
\subjclass[2010]{
35Q55, 35B40, 35P25}
\date{}

\maketitle

\begin{abstract}
This paper is concerned with time global behavior of solutions to nonlinear Schr\"odinger equation with a non-vanishing condition at the spatial infinity. Under a non-vanishing condition, it would be expected that the behavior is determined by the shape of the nonlinear term around the non-vanishing state.
To observe this phenomenon, we introduce a generalized version of the Gross-Pitaevskii equation, which is a typical equation involving a non-vanishing condition, by modifying the shape of nonlinearity around the non-vanishing state. It turns out that, if the nonlinearity decays fast as a solution approaches to the non-vanishing state, then the equation admits a global solution which scatters to the non-vanishing element for both time directions. 
\end{abstract}

\section{Introduction}
This paper is devoted to the study of nonlinear Schr{\"o}dinger equation 
\begin{align}
\left\{
\begin{array}{l}
	\ds i\pa_t u+\Delta u= \mu \left| |u|^2-1\right|^{p-2}(|u|^2-1)u,\quad (t,x) \in \R^{1+n}, \\
	u(0,x)=u_{0}(x),\quad x\in\R^{n},
\end{array}
\right. \label{nls}
\end{align}
where $n=1,2$, $u(t,x) : \R^{1+n} \rightarrow \C$, $\mu=\pm1$, and $p \geq 2$.
We consider the equation with 
 the ``non-vanishing condition''
\begin{align}
	|u(x)|^{2} \rightarrow 1\quad as\ |x| \rightarrow \infty. \label{ass:1}
\end{align}

Nonlinear Schro\"odinger equations with the non-vanishing condition 
 have been extensively studied in mathematical and physical literatures (\cite{MR1500278}, \cite{PhysRevLett.69.1644}, \cite{MR1696311}, \cite{MR1831831} and references therein). A typical example is Gross-Pitaevskii equation
\begin{align} 
	i\pa_t u+\Delta u=(|u|^2-1)u, \quad |u(x)|^{2} \rightarrow 1\quad (|x| \rightarrow \infty), \label{gp}
\end{align}
which is a model equation for various physical phenomena such as Bose-Einstein condensation (see \cite{10.1063/1.1703944}, \cite{pitaevskii1961vortex}). 
The first approach in the study of the well-posedness for \eqref{gp} appears in Bethuel and Saut \cite{MR1669387}. They proved the global well-posedness in $1+H^1(\R^n)$ for $n=2$, $3$. After that, G{\'e}rard \cite{Ge,Ge2} showed that in energy space $E := \{u \in H^{1}_{loc}(\R^n);\; \n u \in L^2,\; |u|^2-1 \in L^2 \}$. 
A pioneering work on the time global behavior of \eqref{gp} is due to Gustafson, Nakanishi, and Tsai \cite{MR2360438, MR2559713, MR2231117}
(cf. Killip, Oh, Pocovnicu, and Vi{\c{s}}an \cite{MR3039823} for cubic-quintic NLS under \eqref{ass:1}). 

The equation \eqref{nls} is a generalization of the Gross-Pitaevskii equation \eqref{gp}
and is a Hamiltonian evolution associated with a \emph{generalized Ginzburg-Landau energy}
\[
	\me_{p}(u) = \|\n u\|_{L^{2}}^{2} + \frac{\mu}{p} \left\| |u|^2 -1 \right\|_{L^{p}}^{p}.
\]
There are previous attempts to generalize Gross-Pitaevskii equation \eqref{gp} 
by Gallo \cite{MR2424376} and the second author \cite{MR3194504}. 
Remark that their generalization is mainly with respect to a shape of the nonlinearity as $|u|\to 0$ or $|u|\to\I$.

When we consider a nonlinear Schr\"odinger equation 
under \eqref{ass:1},
it seems natural to expect that the behavior of a solution is determined by 
the shape, or more explicitly the decay rate, of the nonlinearity as $|u|\to1$,
not by the shape as $|u|\to0$ or $|u|\to\I$. 
It is worth mentioning that, in all previous works on \eqref{gp} and its generalizations listed above,
 the nonlinearity decays to zero in the rate  $O(|u|^2-1)$
as $|x|\to\I$.
The equation \eqref{nls}
is proposed as a generalization of \eqref{gp} with respect to the decay rate of the nonlinearity 
as $|u|\to1$.
In our equation, the nonlinearity decays in the rate $O((|u|^2-1)^{p-1})$ as $|x|\to\I$.

As a first step of the study of a generalized model of \eqref{gp} in this direction,
we consider the case where the nonlinearity decays faster than the Gross-Pitaevskii model, that is, the case $p>2$. 
We first establish local well-posedness and persistence of regularity results.
Then, the goal is to show that
if $p$ is large then the equation \eqref{nls} admits a solution which 
scatters to the non-vanishing element for both time directions.

To this end, we introduce a transform \eqref{nls} by letting 
$u=1+v$. The equation for $v$ is then
\begin{align}
\left\{
\begin{array}{l}
	\ds i\pa_t v+\Delta v=\mu \left| |v|^{2} + 2\re(v) \right|^{p-2}(|v|^{2} + 2\re(v))(1+v), \\
	v(0,x)=v_{0}(x):=u_0(x)-1.
\end{array}
\right. \label{ggp}
\end{align}
The goal is now to find a scattering solution to this equation.
Here, scattering implies that a solution exists globally in time and asymptotically
behaves like a free solution, $u(t) \sim e^{it\Delta} u_\pm$ ($t\to\pm\I$).
The precise definition is given later.

In this paper, we restrict our attention to the solutions to \eqref{gp} of the form $u=1+v$.
However, the energy space corresponding to $\mathcal{E}_p$ ($p\ge2$) contains other kinds of functions if $n=1,2$.
Indeed, the function $u(x)=\exp ([i\log (1+|x|)]^\alpha)$ ($\alpha<1/2$) has finite energy.
As for the case $p=2$, the structure of the energy space is studied by G\'erard \cite{Ge,Ge2}.
If $n\ge3$ then a function $u$ has a finite $\mathcal{E}_2$ energy is written as $u=e^{i\theta}(1+v)$ with some $\theta\in\R$ and $v\in \dot{H}^1$.
The above example is given in \cite{Ge}.

Denote our new nonlinearity by $F$;
\begin{equation}\label{def:F}
F(v) = \mu \left||v|^{2} + 2\re(v)\right|^{p-2}(|v|^{2} + 2\re(v))(1+v).
\end{equation}
Then, the nonlinearity $F(v)$ satisfies 
\begin{equation}\label{eq:Fv}
|F(v)| \le C\( |v|^{k_1} +|v|^{k_2} \).
\end{equation}
and so
\[
	F(v) = 
	\begin{cases}
	O(|v|^{k_1}) \quad (|v| \to 0), \\
	O(|v|^{k_2}) \quad (|v| \to \I),
	\end{cases}
\] 
where $k_1 = p-1$ and $k_2 = 2p-1$.
As long as we work with a function space with Fatou property\footnote{The property that $|f| \le |g|$ a.e.~implies $\norm{f} \le \norm{g}$.}, such as Lebesgue space,
the nonlinearity $F(v)$ can be handled as a ``gauge variant'' double power type nonlinearity.

As for NLS with the finite sum of power type gauge variant nonlinearities, 
Nakamura-Ozawa \cite{MR1991146} show the small data global existence and scattering
in the case where all the exponent of nonlinearity are larger than or equal to the mass-critical power
\begin{equation}\label{eq:km}
k_{\mathrm{m}}:=1+\frac{4}{n}.
\end{equation} 

Thus, the equation \eqref{ggp} can be handled by the argument in \cite{MR1991146} as long as $p \ge 1+k_{\mathrm{m}}$.
Hence, let us concentrate on the case $p<1+k_{\mathrm{m}}$.
In this case, the lower power $k_1$ in \eqref{eq:Fv} becomes mass-subcritical.
It is known that the scattering problem becomes hard in a mass-subcritical case
because the decay of $L^2$-solution is not sufficient for scattering.
Hence, we need another argument.

There are several methods and techniques to treat mass-subcritical nonlinearities.
A use of the operator $J(t)=x+2it\nabla$ or its fractional power is
well-known tool for study of scattering of solutions to NLS with the mass-subcritical nonlinearity 
(see e.g. \cite{GOV,NO,Ma1,Ma2,KMMV}).
However, The technique heavily relies on the gauge invariant structure of the nonlinearity and so 
it does not seem to be suitable with the analysis of \eqref{ggp}.

Our idea here is to use Kato's argument to handle the mass-subcritical part of the nonlinearity.
Kato \cite{MR1275405} prove the small data global existence and scattering for NLS with the 
gauge variant nonlinearity $f$ satisfying $f \in C^{1}(\C; \C)$, $f(0)=0$ and $f'(z) = O(|z|^{k-1})$ for some $k_{\mathrm{St}} < k < k_{\mathrm{e}}$, where 
\begin{equation}\label{eq:kSt}
k_{\mathrm{St}}:=\frac{n+2+\sqrt{n^2+12n+4}}{2n}
\end{equation}
 and $k_{\mathrm{e}}=1+\frac{4}{n-2}$ ($k_{\mathrm{e}} = \I$ if $n=1,2$).
We would emphasize that the range $k \in (k_{\mathrm{St}},k_{\mathrm{e}})$ includes the mass-subcritical case because $k_{\mathrm{St}} < k_{\mathrm{m}} <k_{\mathrm{e}}$.
The key ingredient is non-admissible Strichartz estimates. 
It is known to be a useful tool to obtain a small data scattering result in mass-subcritical case $k<k_{\mathrm{m}}$
(see, for example, \cite{Ma2,Ma3,MS}). 
Thus, in this paper, we shall consider the case $k_{\mathrm{St}} < k_1  <k_{\mathrm{m}}$ and $k_2<k_{\mathrm{e}}$.
We then encounter the restrictions $n=1,2$ and $1+k_{\mathrm{St}} < p  <1+k_{\mathrm{m}}$.

\subsection{Main results}
To state our main results, we introduce several function spaces. 
Let $k_1=p-1$ and $k_2=2p-1$ be the numbers given in \eqref{eq:Fv}.
Let 
\begin{equation}\label{def:s0}
	s_0 := \max(s_1,s_2), \quad s_1:=\frac{n}2-\frac{n}{k_1+1}, \quad s_2 := \frac{n}{2}-\frac{2}{k_{2} -1}.
\end{equation}
We have $s_0=s_n$ for $n=1,2$ and $p\ge2$.
For an interval $I\subset \R$, we define a function space
\begin{align*} 
X(I) &= L(P_1; I) \cap \dot{W}^{s_2}(P_2; I),
\end{align*}
where $L(P_1; I) = L^{q_1}(I, L^{r_1}(\R^n))$ and $\dot{W}^{s_2}(P_2; I) = L^{q_2}(I, \dot{W}^{s_2, r_2}(\R^n))$ with suitable pairs $(q_j,r_j)$ ($j=1,2$) 
satisfying
\[
	\frac2{q_1} + \frac{n}{r_1} = \frac{2}{k_1-1},\quad
	\frac2{q_2} + \frac{n}{r_2}-s_2 = \frac{2}{k_2-1},
\]
respectively.
For the explicit choice of the exponents, see Section \ref{sec:2}.
We remark that, thanks to the relations, $L(P_1; I)$-norm and $\dot{W}^{s_2}(P_2; I)$-norm are invariant under the scaling
\[
u_{\l}(t,x) = \l^{\frac{2}{k_j -1}} u (\l^2 t, \l x)
\] 
for any $\l >0$ and $j=1,2$, respectively. 
The exponent $s_2$ is a scale critical exponent in such a sense that the scaling
$v_0 (x)  \mapsto \l^{\frac{2}{k_2 -1}}v_{0}(\l x)$ leaves the $\dot{H}^{s_2}$-norm invariant.
We will take $r_1:=p=k_1+1$. The exponent $s_1$ comes from the Sobolev embedding $\dot{H}^{s_1}(\R^d) \hookrightarrow L^{r_1}(\R^d)$.
Denote $f \in X_{\mathrm{loc}}(I)$ if $f \in X(J)$ holds for any compact subinterval $J\subset I$.

Before the scattering problems, we establish existence of solutions.
Throughout this paper, we use the notation $U(t):= e^{it\Delta}$.
\begin{dei}[Solution]\label{def:sol}
We say a function $v(t,x) : I \times \R^d \to \C$ is a solution to \eqref{ggp} on an interval $I \subset \R$, $I \ni 0$
if $v \in X_{\mathrm{loc}}(I)$ and satisfies
\begin{equation}\label{iggp}
	v(t) = U(t)v_0 - i \int_0^t U(t-s) F(v(s)) ds
\end{equation}
in $X_{\mathrm{loc}}(I)$.
We call $I$ is a maximal interval of $v$ if $v(t)$ cannot be
extended to any interval strictly larger than $I$.
We denote the maximal interval of $v$ by $I_{\max} = I_{\max}(v) = (T_{\min}, T_{\max})$.
\end{dei}

We establish local well-posedness results of \eqref{ggp} in the homogeneous Sobolev space.

\begin{thm}[Local well-posedness in $\dot{H}^{s_0} \cap \dot{H}^{s_1}$] \label{thm1:1}
Let $n=1,2$ and $1+k_{\mathrm{St}} < p <1+k_{\mathrm{m}}$.
The Cauchy problem \eqref{ggp} is locally well-posed in $\dot{H}^{s_0} \cap \dot{H}^{s_1}$.
Namely, for any $v_{0} \in\dot{H}^{s_0} \cap \dot{H}^{s_1}$, 
there exists a unique maximal solution $v(t) \in X_{\mathrm{loc}}(I_{\max}) \cap C(I_{\max}, \dot{H}^{s_0} \cap \dot{H}^{s_1})$ to \eqref{ggp} on $I_{\max}$.
Furthermore, the map $v_0 \mapsto v$ is continuous in the following sense:
For any compact $I' \subset I_{\max}$, there exists a neighborhood $V$ of $v_0$ in $\dot{H}^{s_0}$ such that the map 
is Lipschitz continuous from $V$ to $X(I') \cap C(I', \dot{H}^{s_0} \cap \dot{H}^{s_1})$.
\end{thm}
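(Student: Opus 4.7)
The plan is to produce $v$ as the unique fixed point of the Duhamel map
\[
\Phi(v)(t) := U(t)v_0 - i\int_0^t U(t-s)F(v(s))\,ds
\]
on a closed ball in $X(I)$ for a short interval $I\ni 0$. The space $X(I)$ is engineered precisely around the double-power structure $|F(v)|\lesssim |v|^{k_1}+|v|^{k_2}$ of \eqref{eq:Fv}. The first leg $L(P_1;I)$ matches the mass-subcritical power $k_1=p-1$ and will be estimated by Kato's non-admissible Strichartz scheme, whose applicability is the very reason for the hypothesis $p>1+k_{\mathrm{St}}$; the second leg $\dot{W}^{s_2}(P_2;I)$ matches the higher, mass-supercritical and energy-subcritical power $k_2=2p-1<k_{\mathrm{e}}$, and will be controlled by a standard admissible Strichartz estimate with $s_2$ derivatives.

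The first step is to record the linear bounds $\|U(t)v_0\|_{L(P_1;I)}\lesssim \|v_0\|_{\dot{H}^{s_1}}$ and $\|U(t)v_0\|_{\dot{W}^{s_2}(P_2;I)}\lesssim \|v_0\|_{\dot{H}^{s_2}}$ together with their inhomogeneous duals, the former being the Kato-type non-admissible Strichartz inequality and the latter the classical one. The second step is the pair of nonlinear estimates
\[
\|F(v)\|_{L^{q_1'}(I,L^{r_1'})} \lesssim |I|^{\theta_1}\bigl(\|v\|_{L(P_1;I)}^{k_1}+\|v\|_{\dot{W}^{s_2}(P_2;I)}^{k_2}\bigr)
\]
and an analogous bound for $\||\nabla|^{s_2}F(v)\|$ in the dual norm of the $\dot{W}^{s_2}(P_2;I)$ leg, both obtained from H\"older in space--time (exponents are forced by the scaling identities of $(q_j,r_j)$), the Sobolev embedding used to pass between the two legs, and a fractional Leibniz/chain rule of Kato--Ponce and Christ--Weinstein type applied to $F$. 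A parallel difference estimate, starting from the pointwise bound
\[
|F(v)-F(w)|\lesssim \bigl(|v|^{k_1-1}+|w|^{k_1-1}+|v|^{k_2-1}+|w|^{k_2-1}\bigr)|v-w|,
\]
produces the Lipschitz property needed for contraction.

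Armed with these estimates, the Banach fixed-point theorem on the ball $\{v\in X(I):\|v\|_{X(I)}\le 2C\|v_0\|_{\dot{H}^{s_0}\cap\dot{H}^{s_1}}\}$ yields a unique solution $v\in X(I)$ for $|I|$ small enough (depending only on $\|v_0\|_{\dot{H}^{s_0}\cap\dot{H}^{s_1}}$). Persistence of $\dot{H}^{s_0}\cap\dot{H}^{s_1}$ regularity and continuity in $t$ are recovered by reapplying Strichartz in $\dot{H}^{s_j}$ to the Duhamel identity, the nonlinear term being already controlled in the dual Strichartz norm. The maximal interval $I_{\max}$ is obtained by iterating the local construction from the endpoints, and continuous dependence follows by running the very same contraction estimate for two solutions with nearby data.

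The main obstacle is the fractional-derivative estimate for $F(v)$: since $F$ contains the factor $||v|^2+2\re(v)|^{p-2}$, it is only $C^{\lceil p\rceil-1}$-smooth, whereas $s_2$ is typically fractional. Justifying $\||\nabla|^{s_2}F(v)\|_{L^{r_2'}}\lesssim (\|v\|^{k_1-1}_{L^{r_1}}+\|v\|^{k_2-1}_{L^{\tilde r}})\,\| |\nabla|^{s_2}v\|_{L^{r_2}}$ therefore calls for a careful H\"older--Strichartz argument whose hypotheses precisely exploit the assumption $p>1+k_{\mathrm{St}}>2$; this is where the exponents $(q_j,r_j)$ must be chosen so that the two legs close simultaneously, and where the lower bound on $p$ guarantees enough regularity of $F$ to legitimize the fractional chain rule.
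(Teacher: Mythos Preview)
Your overall strategy---a contraction in $X(I)$ for the Duhamel map---is the same as the paper's, and the ingredients you list (Kato's non-admissible Strichartz, fractional chain rule for $F$, the decomposition into the $k_1$ and $k_2$ powers) are precisely those used in the paper's Lemmas~\ref{lem1:3}--\ref{lem1:4} and Proposition~\ref{prop1:2}. So the plan is sound. However, two of your stated mechanisms are misplaced, and if you implemented them literally the argument would not close.

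First, the homogeneous bound $\|U(t)v_0\|_{L(P_1;I)}\lesssim\|v_0\|_{\dot H^{s_1}}$ is \emph{not} a Kato non-admissible Strichartz estimate, and in fact fails globally by scaling: the pair $P_1$ is calibrated to the $k_1$-scaling $2/q_1+n/r_1=2/(k_1-1)$, whereas $\dot H^{s_1}$ sits on a different line. The paper obtains this bound only locally, via H\"older in time plus Sobolev embedding and admissible Strichartz (see the proof of Theorem~\ref{thm1:1}), picking up a factor $|I|^{1/q_1}$. Kato's inhomogeneous estimate (Proposition~\ref{stri1:1}) is used for the \emph{Duhamel} term, to get $\|\int_0^t U(t-s)F(v)\,ds\|_{L(P_1)}\lesssim\|F(v)\|_{L(\overline P_1)}$.

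Second, and relatedly, the nonlinear estimates carry \emph{no} factor $|I|^{\theta}$: the exponent relations $\overline P_1=k_1 P_1$ and $\overline P_2=P_2+(k_2-1)P_2'$ (see \eqref{eq:exponents}) make Lemmas~\ref{lem1:3}--\ref{lem1:4} scale-invariant. The smallness that drives the contraction comes entirely from $\|U(t)v_0\|_{X(I)}\to 0$ as $|I|\to 0$. For $n=1$ this decay is quantitative in $|I|$ and $\|v_0\|_{\dot H^{s_1}}$, so your claim that the lifespan depends only on the data norm is correct there; but for $n=2$ the leg $\dot W^{s_2}(P_2)$ is $\dot H^{s_2}$-critical, the bound $\|U(t)v_0\|_{\dot W^{s_2}(P_2;I)}\lesssim\|v_0\|_{\dot H^{s_2}}$ has no time factor, and the lifespan depends on the \emph{profile} of $v_0$ (through dominated convergence of $\|U(t)v_0\|_{\dot W^{s_2}(P_2;I)}$), not just on its norm. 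Adjust those two points and your sketch matches the paper's argument.
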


Let us next see if a solution $v(t)$ of \eqref{ggp} belongs to $H^1$ at some time, then $v(t)$ belongs to $H^1$ as long as
it exists and
has the conserved energy
\[
	\me_{p}(1+v(t)) = \|\n v\|_{L^2}^{2} + \frac{\mu}{p}\left\||v|^2 + 2\re(v)\right\|_{L^p}^{p}.
\]
However, it is easy to observe this one, since we establish the following:
\begin{thm}[Persistence of $\dot{H}^\s$-regularity] \label{thm1:2}
Assume $n=1,2$ and $1+k_{\mathrm{St}} < p  <1+k_{\mathrm{m}}$.
Let $v_{0} \in \dot{H}^{s_0} \cap \dot{H}^{s_1}$ and $v (t)$ be a corresponding solution to \eqref{ggp} on $I\subset \R$ given in Theorem
\ref{thm1:1}.
 If $v_0 \in \dot{H}^\sigma $ for some $0\le \sigma < k_1$, then
$v \in C(I,\dot{H}^\s) \cap \dot{W}^\s_{\mathrm{loc}}(P_2;I)$.
\end{thm}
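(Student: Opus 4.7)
The plan is a standard bootstrap argument, combining Strichartz estimates with a fractional chain rule for the nonlinearity $F$, then iterating on a partition of any compact subinterval of $I_{\max}$. The extra regularity is propagated rather than constructed afresh; existence and uniqueness are already supplied by Theorem \ref{thm1:1}.

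Apply $|\nabla|^\sigma$ to the Duhamel formula \eqref{iggp} and run the Strichartz estimate on a subinterval $J \ni t_0$ of $I_{\max}$, using a pair $(q_2,r_2)$ adapted to $\sigma$ (playing the role that $(q_2,r_2)$ plays for $s_2$ in $X(J)$). This yields
\[
\|v\|_{L^\infty(J,\dot H^\sigma)} + \|v\|_{\dot W^\sigma(P_2;J)}
\lesssim \|v(t_0)\|_{\dot H^\sigma}
+ \bigl\||\nabla|^\sigma F(v)\bigr\|_{L^{q_2'}(J,L^{r_2'})}.
\]
For the right-hand side we invoke the Christ--Weinstein fractional chain rule. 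Since $n\le 2$ gives $k_1 = p-1 > k_{\mathrm{St}} > 1$, the nonlinearity $F$ is of class $C^1$ with $F'(v) = O(|v|^{k_1-1}) + O(|v|^{k_2-1})$, and $F'$ is $(k_1-1)$-Hölder. The hypothesis $\sigma < k_1$ is exactly the regularity threshold that makes $|\nabla|^\sigma F(v)$ obey the pointwise bound
\[
\bigl\||\nabla|^\sigma F(v)\bigr\|_{L^{r_2'}}
\lesssim \bigl\||v|^{k_1-1}|\nabla|^\sigma v\bigr\|_{L^{r_2'}}
+ \bigl\||v|^{k_2-1}|\nabla|^\sigma v\bigr\|_{L^{r_2'}}.
\]
Hölder in space (with the Lebesgue exponents used to define $L(P_1;J)$ and $\dot W^{s_2}(P_2;J)$) and in time then gives
\[
\bigl\||\nabla|^\sigma F(v)\bigr\|_{L^{q_2'}(J,L^{r_2'})}
\lesssim C\bigl(\|v\|_{X(J)}\bigr)\,\|v\|_{\dot W^\sigma(P_2;J)},
\]
with $C(\rho) \to 0$ as $\rho \to 0$.

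Since $v \in X_{\mathrm{loc}}(I_{\max})$, a compact subinterval $J' \subset I_{\max}$ can be cut into finitely many pieces $J_1,\dots,J_N$ on each of which $\|v\|_{X(J_k)}$ is so small that $C(\|v\|_{X(J_k)})$ can be absorbed into the left-hand side. Starting from $t_0 = 0$, this gives an a priori bound $\|v\|_{L^\infty(J_1,\dot H^\sigma)\cap \dot W^\sigma(P_2;J_1)} \lesssim \|v_0\|_{\dot H^\sigma}$. A contraction in the intersection space $X(J_1)\cap C(J_1,\dot H^\sigma)\cap \dot W^\sigma(P_2;J_1)$ (along the lines of the proof of Theorem \ref{thm1:1} with $\sigma$ in place of $s_2$) produces a local $\dot H^\sigma$-solution $\tilde v$ on $J_1$, which must coincide with $v$ on $J_1$ by the uniqueness in Theorem \ref{thm1:1}. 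Reapplying the same argument on $J_2$ with initial datum $v|_{\sup J_1} \in \dot H^\sigma$, and so on, propagates the $\dot H^\sigma$-regularity to all of $J'$. Time continuity $v \in C(J',\dot H^\sigma)$ follows from the continuity of $t \mapsto U(t)v(t_0)$ in $\dot H^\sigma$ together with the inhomogeneous Strichartz estimate applied to $|\nabla|^\sigma F(v)$.

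The main obstacle is the fractional chain rule for $F$ when $\sigma$ is not an integer: $F$ fails to be smooth on $\{|1+v|=1\}$ (in particular at $v=0$) and is only Hölder there. The strict inequality $\sigma < k_1$ is exactly the borderline regularity for which the Christ--Weinstein bound applies to the present $F$, so no extra range can be gained without a finer analysis. A secondary technical point is selecting Strichartz exponents compatible with the two Hölder splits on $|v|^{k_j-1}|\nabla|^\sigma v$, mirroring the choices used for $(q_1,r_1)$ and $(q_2,r_2)$ in the definition of $X(J)$; this is routine since $k_{\mathrm{St}}<k_1<k_{\mathrm{m}}<k_2<k_{\mathrm{e}}$.
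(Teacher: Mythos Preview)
Your proposal is correct and follows essentially the same strategy as the paper: Strichartz plus the fractional chain rule (Lemma~\ref{lem1:2}, applied after the cutoff decomposition $F=F_1+F_2$ so that each piece lies in a single $\Lip(k_i)$ class), then an inductive partition of any compact subinterval into pieces on which $\|v\|_{X(J_k)}$ is small enough to absorb the nonlinear term. The paper packages this as the abstract inheritance-of-regularity result Theorem~\ref{thm:regularity} and then deduces Theorem~\ref{thm1:2} in one line; note also that the admissible pair $P_2$ is fixed (not ``adapted to $\sigma$''), and rerunning a contraction plus uniqueness is unnecessary since the a~priori bound closes directly on the existing solution.
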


Let us now proceed to our main issue, the scattering problem.
To begin with, we introduce the definition of scattering.
\begin{dei}
We say the solution $v$ scatters forward in time if
$T_{\max}=\I$ and $\lim_{t\to\I} U(-t)v(t)$ exists in some sense.
We say $v$ scatters backward in time if
$T_{\min}=-\I$ and $\lim_{t\to-\I} U(-t)v(t)$ exists in some sense.
\end{dei}
We give a criterion for scattering of the solution to \eqref{ggp},
which is one of the main results of the paper. 
\begin{thm}[Scattering criterion]\label{thm1:5}
Assume $n=1,2$ and $1+k_{\mathrm{St}} < p  <1+k_{\mathrm{m}}$. Let $v_{0} \in \dot{H}^{s_0} \cap \dot{H}^{s_1}$ and $v(t) \in C(I_{\max}, \dot{H}^{s_0} \cap \dot{H}^{s_1}) \cap X_{\mathrm{loc}}(I_{\max})$ be a corresponding maximal solution to \eqref{ggp} on $I\subset \R$ given in Theorem
\ref{thm1:1}.
\begin{enumerate}
\item
If $\norm{v}_{X((0, T_{\max}))} < \I$, then $v(t)$ scatters in $\dot{H}^{s_0} \cap \dot{H}^{s_1}$ for forward time.  
\item
If $v(t)$ scatters forward in time $($in $\dot{H}^{s_0} \cap \dot{H}^{s_1} )$ and if $v(t_0)\in \dot{H}^\s$ for some $t_0 \in I_{\max}$ and $0 \le \sigma <k_1$
then $v(t)$ scatters forward in time in $\dot{H}^\s$
\end{enumerate}
Similar assertions hold for backward in time.
In particular, $v\in X(I_{\max})$ implies $I_{\max}=\R$ and $v$ scatters for both time directions.
\end{thm}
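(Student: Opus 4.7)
The plan is to deduce both parts from the Strichartz machinery already underlying Theorems \ref{thm1:1} and \ref{thm1:2}, applied on intervals $(T,\infty)$ with $T\to\infty$. For part (1), the standing hypothesis $\|v\|_{X((0,T_{\max}))}<\infty$ combined with the blow-up alternative built into the local theory (a solution cannot terminate while its $X$-norm stays bounded) forces $T_{\max}=\infty$. Having this, for $0<t_1<t_2$ I write
\[
U(-t_1)v(t_1)-U(-t_2)v(t_2) = -i\int_{t_1}^{t_2} U(-s)F(v(s))\,ds,
\]
and apply the dual Strichartz estimate (including the non-admissible pairs used to define $X$) in $\dot H^{s_0}\cap\dot H^{s_1}$. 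The nonlinear estimate already established in the proof of Theorem \ref{thm1:1} yields, schematically,
\[
\|F(v)\|_{N((t_1,t_2))} \lesssim \|v\|_{X((t_1,t_2))}^{k_1} + \|v\|_{X((t_1,t_2))}^{k_2},
\]
for the appropriate dual Strichartz space $N$. Since $\|v\|_{X((0,\infty))}$ is finite, the tail $\|v\|_{X((T,\infty))}$ tends to $0$ as $T\to\infty$, whence $\{U(-t)v(t)\}_{t>0}$ is Cauchy in $\dot H^{s_0}\cap\dot H^{s_1}$ and converges to some $v_+$.

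For part (2) I would first show that scattering in $\dot H^{s_0}\cap\dot H^{s_1}$ forward in time already implies $\|v\|_{X((0,\infty))}<\infty$: writing $v=U(\cdot)v_++w$ with $w(t)\to 0$ in $\dot H^{s_0}\cap\dot H^{s_1}$, a small-data Strichartz contraction for $w$ on $(T,\infty)$ --- valid once $T$ is large enough that both $\|w(T)\|_{\dot H^{s_0}\cap\dot H^{s_1}}$ and the linear tail $\|U(\cdot)v_+\|_{X((T,\infty))}$ are small --- delivers $\|v\|_{X((T,\infty))}<\infty$, while the local theory closes the gap on $(0,T)$. Theorem \ref{thm1:2} already provides $v\in C(I_{\max},\dot H^\sigma)\cap\dot W^\sigma_{\mathrm{loc}}(P_2;I_{\max})$; partitioning $(t_0,\infty)$ into finitely many subintervals on which $\|v\|_X$ is small and iterating the fractional chain rule estimate of the form
\[
\big\||\nabla|^\sigma F(v)\big\|_{N(I)} \lesssim \big(\|v\|_{X(I)}^{k_1-1}+\|v\|_{X(I)}^{k_2-1}\big)\|v\|_{\dot W^\sigma(P_2;I)}
\]
underlying Theorem \ref{thm1:2} then yields $v\in \dot W^\sigma(P_2;(t_0,\infty))$ globally in time. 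Repeating the Cauchy-sequence argument of part (1) now in the $\dot H^\sigma$ norm gives the convergence of $U(-t)v(t)$ in $\dot H^\sigma$. The concluding ``in particular'' clause is immediate by applying (1) in both time directions.

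The main technical obstacle is the fractional-derivative nonlinear estimate invoked in part (2): because $F$ in \eqref{def:F} is gauge variant and only as smooth as $p-1$ near the origin, one must control $|\nabla|^\sigma F(v)$ in a dual non-admissible Strichartz space via a $C^1$-type fractional chain rule, uniformly for $0\le\sigma<k_1$. Fortunately this is exactly the estimate already required for Theorem \ref{thm1:2}, so the present scattering proof reduces to a global-in-time summation of the local nonlinear estimates rather than to the proof of a genuinely new inequality; the function of part (1) is precisely to guarantee, through the finiteness of $\|v\|_{X((0,\infty))}$, that only finitely many summands arise in that summation.
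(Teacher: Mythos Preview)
Your strategy matches the paper's in outline, but there is a genuine gap in part (2) and a smaller oversight in part (1).

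In part (2), the preliminary step ``scattering in $\dot H^{s_0}\cap\dot H^{s_1}$ implies $\|v\|_{X((0,\infty))}<\infty$'' fails as written: the linear tail $\|U(\cdot)v_+\|_{X((T,\infty))}$ need not be finite, let alone small, because for data in $\dot H^{s_0}\cap\dot H^{s_1}$ one only has $U(\cdot)v_+\in X_{\mathrm{loc}}(\R)$ --- the $L(P_1)$-bound in the proof of Theorem~\ref{thm1:1} carries a factor $|I|^{1/q_1}$ and is not global. The non-admissible pair $P_1$ is precisely what distinguishes this setting from the mass-(super)critical one where such converses are routine. The paper does not attempt this step; it passes directly to persistence of regularity (Theorem~\ref{thm:regularity}), in effect reading part (2) with the finite global $X$-norm from part (1) as a standing hypothesis. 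Under that reading, the remainder of your argument for part (2) coincides with the paper's.

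In part (1), the schematic bound $\|F(v)\|_N \lesssim \|v\|_X^{k_1}+\|v\|_X^{k_2}$ suffices only for the $\dot H^{s_2}$-Cauchy estimate. The actual estimate (Lemma~\ref{lem1:3}) for $s\in\{s_0,s_1\}$ is $\|F(v)\|_{\dot W^s(\overline{P}_2)}\lesssim \|v\|_{\dot W^s(P_2)}\bigl(\|v\|_X^{k_1-1}+\|v\|_X^{k_2-1}\bigr)$, and when $s\ne s_2$ the factor $\|v\|_{\dot W^s(P_2;(0,\infty))}$ is not contained in $\|v\|_X$. The paper obtains it by running the same persistence-of-regularity argument you invoke in part (2), here with $\sigma=s_0,s_1$, \emph{before} carrying out the Cauchy computation.
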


Remark that the criterion is given in terms of solution itself and so that
it is not so easy to check the condition.
We next give two criteria in terms of initial data.
\begin{thm}[Small data scattering I] \label{thm1:3}
Assume $n=1$, $2$ and $1+k_{\mathrm{St}} < p  <1+k_{\mathrm{m}}$. Let $v_{0} \in L^{\frac{n(p-2)}{2}} \cap \dot{H}^{s_2}$. There exists $\delta>0$ such that if 
\[
	\norm{v_0}_{L^{\frac{n(p-2)}{2}}} + \norm{v_0}_{\dot{H}^{s_2}}  \le \delta
\]
then there exists a global solution $v(t) \in X(\R) \cap C(\R, H^{s_2})$ of \eqref{ggp}. Moreover, the solution satisfies 
\[
 \norm{v}_{X(\R)} \le 2\( \norm{v_0}_{L^{\frac{n(p-2)}{2}}} + \norm{v_0}_{\dot{H}^{s_2}} \),
\]
and scatters in $H^{s_2}$ for both time directions.
\end{thm}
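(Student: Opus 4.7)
The plan is to construct the solution via a contraction argument in $X(\R)$. Define
\[
\Phi(v)(t) := U(t) v_0 - i \int_0^t U(t-s) F(v(s))\, ds,
\]
and show that $\Phi$ is a contraction on $B_{2\d} := \{ v \in X(\R) : \norm{v}_{X(\R)} \le 2\d \}$ for $\d$ sufficiently small. The choice of input norm $\norm{v_0}_{L^{n(p-2)/2}} + \norm{v_0}_{\dot H^{s_2}}$ is scale-dictated: $L^{n(p-2)/2}$ is invariant under the $k_1$-scaling of $v_0$ and $\dot H^{s_2}$ under the $k_2$-scaling. Accordingly, two linear estimates are needed. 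The first is the non-admissible Kato--Strichartz estimate
\[
\norm{U(\cdot) v_0}_{L(P_1;\R)} \lesssim \norm{v_0}_{L^{n(p-2)/2}},
\]
(with a Lorentz refinement in the time variable if necessary, following \cite{MR1275405}). The second is the admissible Strichartz estimate carrying $s_2$ derivatives,
\[
\norm{U(\cdot) v_0}_{\dot W^{s_2}(P_2;\R)} \lesssim \norm{v_0}_{\dot H^{s_2}},
\]
together with their Duhamel analogues (invoking Christ--Kiselev where the pair is non-admissible).

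For the Duhamel term I would split $F(v)$ according to $|F(v)| \lesssim |v|^{k_1} + |v|^{k_2}$. The $k_1$-piece is estimated by H\"older with exponents dual to $L(P_1;\R)$, yielding a bound by $\norm{v}_{L(P_1;\R)}^{k_1}$. The $k_2$-piece is estimated by H\"older combined with the Sobolev embedding $\dot W^{s_2, r_2} \hookrightarrow L^{n(p-1)}$, which converts several factors of $v$ into the $\dot W^{s_2}(P_2;\R)$ norm; the remaining derivative falling on $F(v)$ itself is distributed by a fractional chain rule of Kato--Ponce / Christ--Weinstein type. Assembling these bounds with the linear estimates,
\[
\norm{\Phi(v)}_{X(\R)} \le C_0 \bigl( \norm{v_0}_{L^{n(p-2)/2}} + \norm{v_0}_{\dot H^{s_2}} \bigr) + C_1 \bigl( \norm{v}_{X(\R)}^{k_1} + \norm{v}_{X(\R)}^{k_2} \bigr),
\]
with the analogous Lipschitz bound for $\Phi(v) - \Phi(w)$. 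Since $k_1, k_2 > 1$, taking $\d$ small closes the contraction and produces a unique $v \in X(\R)$ of size at most $2\d$.

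For scattering, the dual Strichartz estimate applied to $F(v)$ on $(t, t')$ yields
\[
\norm{U(-t) v(t) - U(-t') v(t')}_{\dot H^{s_2}} \lesssim \norm{v}_{X((t,t'))}^{k_1} + \norm{v}_{X((t,t'))}^{k_2},
\]
which tends to $0$ as $t, t' \to \pm\infty$ because $v \in X(\R)$. This produces asymptotic states $v_\pm \in \dot H^{s_2}$ with $U(-t) v(t) \to v_\pm$. The main obstacle will be the fractional chain rule for $F$, because the factor $\bigl||v|^2 + 2 \re v\bigr|^{p-2}$ has only finite H\"older regularity when $p - 2 \notin \Z$, and $F$ is genuinely gauge-variant so no algebraic simplification is available. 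Verifying that $\norm{|\n|^{s_2} F(v)}$ is controlled by the desired product of $v$-norms requires a sharp Christ--Weinstein-type bound with intermediate exponents consistent with the Sobolev embedding invoked above; the threshold $p > 1 + k_{\mathrm{St}}$ is precisely what makes those exponent relations admissible.
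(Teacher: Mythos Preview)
Your proposal is correct and follows essentially the same approach as the paper: the paper reduces to its abstract local existence result (Theorem~\ref{thm:aeq_lwp1}, itself a contraction argument via Proposition~\ref{prop1:2}) after bounding $\norm{U(t)v_0}_{X(\R)}$ by the same two linear estimates you identify, and the nonlinear/scattering estimates you sketch are exactly Lemmas~\ref{lem1:3}--\ref{lem1:4} together with the argument in the proof of Theorem~\ref{thm1:5}. The only point to tighten is that the splitting of $F$ must be done via a smooth cutoff $\varphi(|z|)$ (as in the paper's $F_1,F_2$) rather than a mere pointwise bound, so that each piece lies in the appropriate $\Lip\mu$ class needed for the fractional chain rule; and note that $v_0\in L^{n(p-2)/2}\cap \dot H^{s_2}$ forces $v_0\in L^2$, which together with persistence of regularity upgrades your $\dot H^{s_2}$ scattering to the stated $H^{s_2}$ scattering.
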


\begin{thm}[Small data scattering II]\label{thm1:4}
Assume $n=1,2$ and $1+k_{\mathrm{St}} < p  <1+k_{\mathrm{m}}$. Let $v_0 \in |x|^{-(\frac{2}{p-2}-\frac{n}2)}L^2 \cap \dot{H}^{s_2} $. There exists $\delta>0$ such that if 
\[
	\norm{|x|^{\frac{2}{p-2}-\frac{n}2} v_0}_{L^2} + \norm{v_0}_{\dot{H}^{s_2}}  \le \delta,
\]
then, there exists a global solution $v(t) \in X(\R) \cap C(\R, H^{s_2})$ of \eqref{ggp}. Moreover, the solution satisfies 
\[
 \norm{v}_{X(\R)} \le C \( \norm{|x|^{\frac{2}{p-2}-\frac{n}2} v_0}_{L^2} + \norm{v_0}_{\dot{H}^{s_2}} \),
\]
and scatters in $H^{s_2}$ for both time directions.
\end{thm}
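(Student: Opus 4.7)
My plan is to apply the Banach contraction principle to the Duhamel operator $\Phi(v)(t) := U(t) v_0 - i \int_0^t U(t-s) F(v(s))\, ds$ on a small ball of $X(\R)$, in parallel with the proof of Theorem \ref{thm1:3}. Only the bound on the free part $\norm{U(t)v_0}_{X(\R)}$ needs to be rederived: a scaling check shows that $\norm{|x|^{2/(p-2) - n/2} v_0}_{L^2}$ is exactly the weighted $L^2$ norm at the scaling of the lower power $k_1 = p-1$, a natural substitute for $\norm{v_0}_{L^{n(p-2)/2}}$ when controlling the $L(P_1;\R)$ component of $X(\R)$; meanwhile the $\dot H^{s_2}$ norm handles the $\dot W^{s_2}(P_2;\R)$ component by a standard Strichartz inequality.

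Hence the only genuinely new estimate is
\begin{equation*}
\norm{U(t) v_0}_{L(P_1;\R)} \lesssim \norm{|x|^{\alpha} v_0}_{L^2},
\qquad \alpha := \tfrac{2}{p-2} - \tfrac{n}{2}.
\end{equation*}
Both sides are invariant under the $k_1$-critical scaling $v_0(x)\mapsto \lambda^{2/(p-2)}v_0(\lambda x)$, so the exponents are forced. I would prove the estimate by upgrading the target to the Lorentz pair $L^{q_1,2}_t L^{r_1,2}_x$ (which embeds into $L^{q_1}_t L^{r_1}_x$ since $q_1, r_1 \ge 2$ in this mass-subcritical regime), and then reducing via $TT^*$/duality to a Kato--Yajima type bilinear form estimate with the double weight $|x|^{-2\alpha}$, which in turn follows from the $L^{r_1'}\to L^{r_1}$ dispersive bound combined with Stein--Weiss fractional integration absorbing the two weights. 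The range $\alpha \in (0, n/2)$ required by Stein--Weiss is precisely guaranteed by $1+k_{\mathrm{St}} < p < 1+k_{\mathrm m}$.

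With this in hand the free part satisfies $\norm{U(t) v_0}_{X(\R)} \le C\delta$, and the nonlinear contribution is controlled by the same multilinear estimates built in the proof of Theorem \ref{thm1:1} and reused in Theorem \ref{thm1:3}, giving $\norm{\Phi(v) - U(t) v_0}_{X(\R)} \lesssim \norm{v}_{X(\R)}^{k_1} + \norm{v}_{X(\R)}^{k_2}$ together with a matching Lipschitz estimate for differences. For $\delta$ sufficiently small, $\Phi$ becomes a contraction on a ball of radius $2C\delta$ in $X(\R)$, producing a unique global fixed point $v$. Since $v\in X(\R)$, Theorem \ref{thm1:5}(1) applied in both time directions yields scattering in $\dot H^{s_0}\cap \dot H^{s_1}$; the persistence of regularity result Theorem \ref{thm1:2} promotes the flow to $C(\R; H^{s_2})$, and then Theorem \ref{thm1:5}(2) with $\sigma=s_2$ (which satisfies $\sigma<k_1$ throughout the parameter range) upgrades the scattering to $H^{s_2}$.

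The main obstacle is the weighted free evolution estimate: verifying that the Lorentz exponents and the weight $|x|^{-\alpha}$ align correctly with Stein--Weiss, and that the final embedding $L^{q_1,2}\subset L^{q_1}$ produces a bound of the exact form claimed. Once that is secured, the remainder of the argument is a routine rerun of the contraction scheme used in the proof of Theorem \ref{thm1:3}.
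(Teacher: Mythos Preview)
Your approach matches the paper's: control $\norm{U(t)v_0}_{X(\R)}$ via the weighted Strichartz estimate (which the paper simply cites from \cite{NO,Ma2} rather than sketching a Stein--Weiss proof as you do) and the admissible Strichartz bound for the $\dot W^{s_2}(P_2)$ piece, then feed $\norm{U(t)v_0}_{X(\R)}\le C\delta$ into the small-data machinery and conclude scattering exactly as for Theorem~\ref{thm1:3}.

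One caveat on your final paragraph: Theorems~\ref{thm1:5} and~\ref{thm1:2} as stated carry the standing hypothesis $v_0\in\dot H^{s_0}\cap\dot H^{s_1}$, and for $n=1$ one has $s_0=s_1>s_2$, so $v_0\in H^{s_2}$ alone does not place $v_0$ in $\dot H^{s_1}$. The paper sidesteps this by invoking Theorem~\ref{thm:aeq_lwp1} directly (whose only hypothesis is $\norm{V}_{X(I)}\le\delta_0$) and then arguing ``as in the proof of Theorem~\ref{thm1:5}'' with $s=s_2$ and $\sigma=0$, rather than quoting the packaged theorems; you should route through Theorems~\ref{thm:aeq_lwp1} and~\ref{thm:regularity} in the same way.
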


\begin{rem}
We remark that $v_{0} \in L^{\frac{n(p-2)}{2}} \cap \dot{H}^{s_2}$ or 
$v_0 \in |x|^{-(\frac{2}{p-2}-\frac{n}2)}L^2 \cap \dot{H}^{s_2} $
yield $v_0 \in L^2$.
\end{rem}

The rest of this paper is organized as follows. 
In Section \ref{sec:2}, we state non-admissible Strichartz' estimates and collect useful lemmas.
We also introduce several notation and definitions which are used throughout this paper. 
In Section \ref{sec:3}, we turn to the estimates on the nonlinearity of \eqref{ggp}.
Section \ref{sec:4} is devoted to a well-posedenss in a generalized framework.
Finally, we show the main results in Section \ref{sec:5}. 

\section{Preliminary} \label{sec:2}

\subsection{Strichartz' estimates for non-admissible pairs}
To present non-admissible Strichartz' estimates by Kato \cite{MR1275405}, we put the following notations: 
\begin{align*}
&B = (1/2,0), \quad C = (1/2-1/n,1/2) \quad (C=(0, 1/4) \; \text{if $n=1$}),  \\
&D = ((n-2)/2(n-1), n/2(n-1)), \quad (D=(0,1/2) \; \text{if $n=1$}), \\
&E = (1/2-1/n, 1), \quad F = (1/2-1/n, 0) \\
& \hspace{3cm} (E=(0, 1/2),\; F = (0,0) \; \text{if $n=1$}), \\
&B' = (1/2, 1), \quad C' = (1/2+1/n, 1/2) \quad (C'=(1, 3/4) \; \text{if $n=1$}), \\
&E' = (1/2+1/n, 0), \quad F' = (1/2+1/n, 1) \\ 
& \hspace{3cm} (E'=(1, 1/2),\; F' = (1,1) \; \text{if $n=1$}), \\
&T = \triangle (BEF), \quad T' = \triangle (B'E'F'), \quad \hat{T} = \triangle (BCD),
\end{align*}
where the triangles $T$ and $T'$ are open except that $B$ and $B'$ are included, respectively, and $\hat{T} \subset T$, which include the side $]CD[$ (the segment connecting $C$ and $D$ except for edge points) if $n \neq 2$. 
Moreover, we denote $\pi(P) = x+2y/n$ for any $P = (x,y) \in [0,1]^2$. 

We state non-admissible Strichartz' estimates.
 
\begin{pro}[\cite{MR1275405}] \label{stri1:1}
Let $t_0 \in \R$. If $P \in T$, $\ol{P} \in T'$ with $\pi(\ol{P}) - \pi(P) = 2/n$, then it holds that
\[
	\Lebn{\int_{t_{0}}^{t}U(t-s)f(s) ds}{\ol{P}} \le C\Lebn{f}{P}. 
\]  
\end{pro}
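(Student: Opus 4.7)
My plan is to reproduce Kato's strategy from \cite{MR1275405}, combining the dispersive decay of the free Schr\"odinger group with the Hardy--Littlewood--Sobolev inequality in the time variable. The first ingredient is the dispersive bound
\[
    \|U(\tau)g\|_{L^{b}_x}\le C|\tau|^{-\frac{n}{2}(\frac{1}{a}-\frac{1}{b})}\|g\|_{L^{a}_x},
\]
valid for dual pairs $1\le a\le 2\le b\le\infty$, $1/a+1/b=1$, and obtained by Riesz--Thorin interpolation between $\|U(\tau)\|_{L^1\to L^\infty}\le C|\tau|^{-n/2}$ and $\|U(\tau)\|_{L^2\to L^2}=1$. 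The triangles $T$ and $T'$ are designed so that the spatial exponents read off from $P\in T$ and $\bar P\in T'$ form such a dual pair, and applying the bound pointwise in $s$ to $U(t-s)f(s)$ yields
\[
    \|U(t-s)f(s)\|_{L^{\bar r}_x}\le C|t-s|^{-\gamma}\|f(s)\|_{L^r_x}
\]
with $\gamma\in(0,1)$ linear in $1/r-1/\bar r$.

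Next I would take the outer $L^{\bar q}_t$-norm on both sides and appeal to the one-dimensional Hardy--Littlewood--Sobolev inequality
\[
    \bigl\||t|^{-\gamma}\ast h\bigr\|_{L^{\bar q}_t}\le C\|h\|_{L^{q}_t}\quad\text{applied to}\quad h(s)=\|f(s)\|_{L^r_x},
\]
valid whenever $\gamma\in(0,1)$ and $1/\bar q+1=1/q+\gamma$. A direct computation shows that this numerical constraint, together with the formula for $\gamma$ in terms of the spatial exponents, is exactly equivalent to the scaling identity $\pi(\bar P)-\pi(P)=2/n$ postulated in the statement. Thus the geometric condition on $(P,\bar P)$ is tailored precisely to make HLS close the argument on the interior of $T\times T'$.

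The hard part will be the boundary cases. At the admissible vertex pair $(P,\bar P)=(B,B')$, explicitly included in $T\times T'$, the HLS range degenerates (one time exponent hits $1$ or $\infty$ and $\gamma$ reaches an endpoint), so HLS no longer applies. I would deal with this case separately: the scaling forces it to reduce to the classical energy inequality $\|\int_{t_0}^t U(t-s)f(s)\,ds\|_{L^\infty_tL^2_x}\le \|f\|_{L^1_tL^2_x}$, which follows at once from Minkowski's integral inequality and the $L^2$-unitarity of $U(t)$. Bilinear complex interpolation between this energy endpoint and the interior HLS-based estimates would then sweep out the full range claimed. The principal nuisance is the bookkeeping: one must carefully verify that the combinatorial description of $T$, $T'$, $\hat T$ (including their special definitions when $n=1$ and the inclusion of the segment $]CD[$ in $\hat T$ when $n\neq 2$) matches the combined ranges of validity of the dispersive and HLS steps.
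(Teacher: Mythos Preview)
The paper does not supply its own proof of this proposition; it is quoted from Kato \cite{MR1275405} and used as a black box, with the remark that the later extensions in \cite{Fo,Vi,Ko} are not needed here. So there is no in-paper argument to compare against, and your sketch is, appropriately, an attempt to reconstruct Kato's.

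Your overall plan---pointwise dispersive decay followed by Hardy--Littlewood--Sobolev in time, then interpolation---is indeed the backbone of Kato's proof. However, one step is misstated and, taken literally, leaves a genuine gap. You assert that for $P\in T$ and $\bar P\in T'$ the spatial exponents automatically form a H\"older dual pair. That is not true: the scaling relation $\pi(\bar P)-\pi(P)=2/n$ is a single linear constraint on the four coordinates, whereas spatial duality $x_P+x_{\bar P}=1$ is an independent one. The interpolated dispersive bound $\|U(\tau)\|_{L^a\to L^b}\lesssim|\tau|^{-\frac{n}{2}(\frac1a-\frac1b)}$ is available only on the dual line $1/a+1/b=1$, so your pointwise-in-$s$ estimate together with HLS in time covers only the two-dimensional ``dual diagonal'' inside the three-dimensional constraint set. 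Worse, the energy vertex $(B,B')$ you single out for the endpoint also sits on this diagonal (both spatial exponents equal $2$), so complex interpolation between it and your HLS family cannot leave the diagonal and hence cannot fill out $T\times T'$. In Kato's argument the missing off-diagonal estimates come from feeding in the admissible homogeneous and dual homogeneous Strichartz bounds as additional endpoint families; it is multilinear interpolation among \emph{all} of these that sweeps out the full triangles. Your sketch needs that extra ingredient.
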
 
\begin{rem}
Further extension is obtained in \cite{Fo,Vi,Ko}.
However, the above version is sufficient for our purpose.
\end{rem}

\begin{pro}[\cite{MR1275405}] \label{stri1:2}
Let $1/2 < 1/q < m/2(m-1)$ $( 1/2 < 1/q \le 1$ if $n=1 )$. If $P \in \hat{T}$ with $\pi(P) = 1/q$, then it holds that
\[
	\Lebn{U(t)f}{P} \le C\norm{f}_{L^{q}}. 
\]
\end{pro}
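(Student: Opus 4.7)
My plan to prove Proposition \ref{stri1:2} is to establish endpoint estimates at the three vertices of $\hat{T} = \triangle(BCD)$ and then reach the interior by real interpolation. The key estimate throughout is the dispersive bound
\[
	\|U(t)f\|_{L^{\tilde r}(\R^n)} \le C|t|^{-n(1/2-1/\tilde r)}\|f\|_{L^{\tilde r'}(\R^n)}, \qquad \tilde r \ge 2,
\]
obtained by Riesz--Thorin interpolation between the $L^2$-unitarity $\|U(t)f\|_{L^2} = \|f\|_{L^2}$ and the kernel bound $\|U(t)f\|_{L^\infty} \le C|t|^{-n/2}\|f\|_{L^1}$.

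At $B=(1/2,0)$, corresponding to $\tilde q = \infty$, $\tilde r = 2$, $q = 2$, the estimate reduces to $\|U(t)f\|_{L^\infty_t L^2_x} \le \|f\|_{L^2}$, i.e., mass conservation. At $C$ it is the endpoint Strichartz estimate of Keel--Tao, available precisely when $n \ne 2$; this dichotomy is exactly why the statement excludes $]CD[$ from $\hat{T}$ in dimension two. At the third vertex $D = ((n-2)/2(n-1), n/2(n-1))$ a short computation gives $\tilde r' = q$ and $n(1/2 - 1/\tilde r) = 1/\tilde q$, so the dispersive estimate specializes to
\[
	\|U(t)f\|_{L^{\tilde r}_x} \le C|t|^{-1/\tilde q}\|f\|_{L^q}.
\]
Since $|t|^{-1/\tilde q}$ belongs to the weak Lebesgue class $L^{\tilde q,\infty}(\R_t)$, taking norms produces the weak-type endpoint bound
\[
	\|U(t)f\|_{L^{\tilde q,\infty}_t L^{\tilde r}_x} \le C\|f\|_{L^q}.
\]

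The final step will be real interpolation. Combining the strong endpoints at $B$ and $C$ with the weak endpoint at $D$, the Lorentz identity $(L^{p_0}(X), L^{p_1,\infty}(X))_{\theta, p_\theta} = L^{p_\theta}(X)$ applied to the time variable upgrades the weak $L^{\tilde q,\infty}_t$ bound at $D$ to the strong $L^{\tilde q}_t$ target for every $P$ in the interior of $\hat{T}$, as well as on the open side $]CD[$ when $n \ne 2$. In the $n = 2$ case, the failure of the Keel--Tao endpoint at $C$ forces one to interpolate instead between $B$, $D$, and an intermediate admissible pair on $]BC[$, which sweeps out the interior of $\hat{T}$ but cannot reach $]CD[$, in precise agreement with the statement. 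The most delicate step will be this final interpolation: one has to track the Lorentz exponents carefully so that the source norm remains strong $L^q$ and the target is indeed the diagonal Bochner norm $L^{\tilde q}_t L^{\tilde r}_x$, both of which depend critically on the scaling identity $\pi(P) = 1/q$ built into the hypothesis.
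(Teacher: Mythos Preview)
The paper does not supply a proof of this proposition; it is quoted from Kato \cite{MR1275405} and used as a black box, so there is no in-paper argument to compare against. Your overall strategy---endpoint bounds at the three vertices of $\hat T$ followed by real interpolation---is the standard route and is essentially what Kato does.

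There is, however, a concrete gap in the interpolation step as you have written it. The identity you invoke,
\[
	(L^{p_0}(X),\,L^{p_1,\infty}(X))_{\theta,p_\theta}=L^{p_\theta}(X),
\]
requires the inner space $X$ to be \emph{fixed}, whereas the spatial exponent $r$ differs at $B$, $C$, and $D$ (namely $L^2_x$, $L^{2n/(n-2)}_x$, $L^{2(n-1)/(n-2)}_x$). The scalar identity therefore does not apply. What one actually needs is the vector-valued real interpolation theorem of Lions--Peetre (and its Lorentz refinement): for $p_0\neq p_1$ and $1/p_\theta=(1-\theta)/p_0+\theta/p_1$,
\[
	\bigl(L^{p_0}(A_0),\,L^{p_1}(A_1)\bigr)_{\theta,p_\theta}=L^{p_\theta}\bigl((A_0,A_1)_{\theta,p_\theta}\bigr).
\]
Running this between, say, a point of $]BC[$ (strong, $L^2$ source) and $D$ (weak, $L^{q_D}$ source) with second index $q_\theta$ yields the off-diagonal target $L^{\tilde q_\theta,q_\theta}_t\bigl(L^{r_\theta,q_\theta}_x\bigr)$, not the desired $L^{\tilde q_\theta}_t L^{r_\theta}_x$. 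A separate Lorentz embedding is then required, and it holds precisely when $q_\theta\le\min(\tilde q_\theta,r_\theta)$. A short calculation using $\pi(P)=1/q$ shows this inequality is valid throughout $\hat T$ (strict in the interior, with equality $q=\tilde q$ on the side $CD$), so the argument does close---but this verification is the genuine content of the ``delicate step'' you flag, and without it the sketch is incomplete.
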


Here, we give two useful Lemmas to estimate the nonlinearity. To this end, we introduce a Lipschitz $\mu$ norm $(\mu>0)$. 
For a multi-index $\a=(\a_1,\a_2)\in(\Z_{\ge0})^2$, define $\pa^{\a}=\pa_z^{\a_1} \pa_{\overline{z}}^{\a_2}$.
Put $\mu = N + \b$ with $N \in \Z$ and $\b \in (0,1]$. For a function $G \in C^{N}( \R^2, \C)$, we define
\begin{align*}
\norm{G}_{\Lip \mu} = \sum_{|\a| \le N-1} \sup_{z \in \C \setminus \{0\}} \frac{|\pa^{\a}G(z)|}{|z|^{\mu - |\a|}} + \sum_{|\a| = N}\sup_{z \neq z'} \frac{|\pa^{\a}G(z)-\pa^{\a}G(z')|}{|z-z'|^{\b}}.
\end{align*}
If $G \in C^{N}(\R^2, \C)$ and $\norm{G}_{\Lip \mu} < \I$, then we write $G \in \Lip \mu$.

\begin{lem}[\cite{KPV}] \label{lem1:1}
Assume that $s \ge 0$. Let $p$, $q$, $p_{i}$, $q_{i} \in (1, \I)$ $( i = 1, 2, 3, 4)$. Then, we have 
\begin{align*}
\||D_x|^s(fg)\|_{L^{p}_{x}L^{q}_{t}} \le C(\||D_x|^s f\|_{L^{p_1}_{t}L^{q_1}_{x}}\norm{g}_{L^{p_2}_{t}L^{q_2}_{x}} + \|f\|_{L^{p_3}_{t}L^{q_3}_{x}}\norm{|D_x|^s g}_{L^{p_4}_{t}L^{q_4}_{x}} )
\end{align*}
provided that
\[
\frac{1}{p} = \frac{1}{p_1}+\frac{1}{p_2} = \frac{1}{p_3} + \frac{1}{p_4}, \quad \frac{1}{q} = \frac{1}{q_1}+\frac{1}{q_2} = \frac{1}{q_3} + \frac{1}{q_4},
\]
where the constant $C$ is independent of $f$.
\end{lem}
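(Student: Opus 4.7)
The plan is to reduce the mixed-norm fractional Leibniz inequality to its purely spatial Kato-Ponce counterpart, using the fact that $|D_x|^s$ acts only in $x$ while $t$ plays the role of a parameter. Once the pointwise-in-$t$ spatial bound
\[
 \bigl\||D_x|^s(fg)(t,\cdot)\bigr\|_{L^{q}_x} \lesssim \||D_x|^sf(t,\cdot)\|_{L^{q_1}_x}\|g(t,\cdot)\|_{L^{q_2}_x} + \|f(t,\cdot)\|_{L^{q_3}_x}\||D_x|^sg(t,\cdot)\|_{L^{q_4}_x}
\]
is established, the mixed-norm conclusion follows by taking $L^{p_i}_t$ norms and applying H\"older in $t$ with $1/p=1/p_1+1/p_2=1/p_3+1/p_4$; Minkowski's integral inequality is then used, if necessary, to match the ordering on the left-hand side. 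Since the spatial constant depends only on $s$ and the spatial exponents, the time integration contributes nothing extra.

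To prove the spatial inequality I would carry out the standard paraproduct argument. Fix a Littlewood-Paley decomposition $\{\Delta_j\}_{j\in\Z}$ in the $x$ variable and split
\[
 fg = \Pi_1(f,g) + \Pi_1(g,f) + \Pi_2(f,g),\qquad \Pi_1(f,g)=\sum_j S_{j-3}f\,\Delta_j g,\quad \Pi_2(f,g)=\sum_{|j-k|\le 2}\Delta_j f\,\Delta_k g.
\]
For the off-diagonal paraproduct $\Pi_1(f,g)$ the output is frequency-localized at scale $2^j$, so Coifman-Meyer symbol calculus yields the pointwise bound
\[
 |\,|D_x|^s\Pi_1(f,g)(x)|\lesssim Mf(x)\,\Bigl(\sum_j (2^{js}|\Delta_j g(x)|)^2\Bigr)^{1/2},
\]
with $M$ the Hardy-Littlewood maximal function. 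Combining the Fefferman-Stein vector-valued maximal inequality in $L^{q_4}_x(\ell^2)$ (valid for $q_4\in(1,\infty)$) with the square-function characterization $\|h\|_{\dot H^{s,q_4}}\simeq \bigl\|(\sum_j 2^{2js}|\Delta_j h|^2)^{1/2}\bigr\|_{L^{q_4}}$ controls this piece by $\|f\|_{L^{q_3}}\||D_x|^s g\|_{L^{q_4}}$, with the symmetric bound for $\Pi_1(g,f)$.

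The main obstacle is the diagonal piece $\Pi_2(f,g)$, where the two factors share the same frequency scale $2^j$ and a naive symbol estimate no longer produces the smoothing factor of order $s$. The standard remedy is to insert a fattened low-frequency projector,
\[
 |D_x|^s\Pi_2(f,g) = \sum_j |D_x|^s S_{j+4}\bigl(\Delta_j f\cdot\wt\Delta_j g\bigr),
\]
and to transfer the factor $2^{js}$ onto either $\Delta_j f$ or $\wt\Delta_j g$ via the Mikhlin-type symbol bound for $|D_x|^s S_{j+4}$. Cauchy-Schwarz in $j$ followed by a further application of the square-function characterization closes the spatial Kato-Ponce bound, and the time integration of the opening paragraph then delivers the stated mixed-norm inequality, matching the formulation of Kenig-Ponce-Vega \cite{KPV}.
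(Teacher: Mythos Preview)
The paper does not supply a proof of this lemma at all: it is stated with the citation \cite{KPV} and used as a black box, so there is no ``paper's own proof'' to compare against. Your sketch is the standard paraproduct derivation of the Kato--Ponce inequality, and the reduction you describe---freeze $t$, apply the spatial fractional Leibniz rule, then H\"older in $t$---is exactly how one passes from the purely spatial estimate to the mixed $L^{p}_{t}L^{q}_{x}$ version. The outline of the paraproduct argument (Bony decomposition, Fefferman--Stein for the low-high piece, transferring $2^{js}$ onto one factor in the diagonal piece via the Mikhlin bound for $|D_x|^{s}S_{j+4}$) is correct and matches what is done in \cite{KPV} and later expositions.

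One small point worth flagging: the left-hand side in the paper's statement reads $L^{p}_{x}L^{q}_{t}$, while the right-hand side carries $L^{p_i}_{t}L^{q_i}_{x}$ norms. Given how the lemma is actually applied in the proof of Lemma~\ref{lem1:4} (where all norms are of the form $L^{r}_{t}L^{q}_{x}$ via the $L(P;I)$ and $\dot W^{s}(P;I)$ notation), this is almost certainly a typographical slip for $L^{p}_{t}L^{q}_{x}$. Your appeal to Minkowski ``if necessary'' would not rescue a genuine reversal of order unless $p\ge q$, so it is worth noting explicitly that you are proving the $L^{p}_{t}L^{q}_{x}$ version, which is what the paper in fact uses.
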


\begin{lem}[\cite{MR1124294,MS}] \label{lem1:2}
Suppose that $\mu >1$ and $s \in (0,\mu)$. Let $G \in \Lip \mu$. If $p$, $p_1$, $p_2$, $q$, $q_1$, $q_2 \in (1, \infty)$ satisfies 
\[
\frac{1}{p} = \frac{\mu -1}{p_1}+\frac{1}{p_2}, \quad \frac{1}{q} = \frac{\mu -1}{q_1}+\frac{1}{q_2},
\]
then there exists a positive constant $C$ depending on $\mu$, $s$, $p_1$, $p_2$, $q_1$, $q_2$ such that 
\[
\| |D_x|^{s} G(f) \|_{L^{p}_{t}L^{q}_{x}} \le C\|G\|_{\Lip \mu}\|f\|_{L^{p_1}_{t}L^{q_1}_{x}}^{\mu-1}\||D_x|^{s} f\|_{L^{p_2}_{t}L^{q_2}_{x}}
\]
holds for any $f$ satisfying  $f \in L^{p_1}_{t}L^{q_1}_{x}$ and $|D_x|^s f \in L^{p_2}_{t}L^{q_2}_{x}$. 
\end{lem}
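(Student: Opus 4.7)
The plan is to prove the fractional chain rule in Lemma \ref{lem1:2} by combining Christ--Weinstein's integral-representation strategy with a careful handling of the limited smoothness encoded in $\|G\|_{\Lip\mu}$.

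First I would reduce to a purely spatial estimate. Since the exponent relations $\tfrac1p = \tfrac{\mu-1}{p_1}+\tfrac1{p_2}$ and $\tfrac1q = \tfrac{\mu-1}{q_1}+\tfrac1{q_2}$ decouple time from space, a single H\"older inequality in $t$ reduces the claim to the fixed-time inequality
\[
	\||D_x|^s G(f(t,\cdot))\|_{L^q_x} \le C\|G\|_{\Lip\mu}\,\|f(t,\cdot)\|_{L^{q_1}_x}^{\mu-1}\,\||D_x|^s f(t,\cdot)\|_{L^{q_2}_x},
\]
which is then integrated in $t$.

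For the base range $0<s<\min(1,\mu-1)$ I would use the Stein--Gagliardo square-function characterization
\[
	\||D_x|^s F\|_{L^q}^q \sim \int_{\R^n}\left(\int_{\R^n}\frac{|F(x+h)-F(x)|^2}{|h|^{n+2s}}\,dh\right)^{q/2}dx
\]
with $F=G(f)$. The $\Lip\mu$ hypothesis gives, by an elementary integration in the variable $w$, the pointwise bound
\[
	|G(z)-G(w)| \le C\|G\|_{\Lip\mu}\bigl(|z|^{\mu-1}+|w|^{\mu-1}\bigr)|z-w|.
\]
Splitting the resulting integrand into the \emph{polynomial factor} $(|f(x+h)|+|f(x)|)^{\mu-1}$ and the \emph{difference factor} $|f(x+h)-f(x)|$, controlling the former pointwise by the Hardy--Littlewood maximal function $M(|f|^{\mu-1})(x)$, and recognizing that the latter reconstructs $|D_x|^s f$ through the same square function, a final H\"older split with the prescribed exponents combined with the Fefferman--Stein vector-valued inequality closes the bound.

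For $s\ge1$ I would argue by induction on $\lfloor s\rfloor$. Writing $s=m+\sigma$ with $m\in\Z_{\ge1}$ and $\sigma\in[0,1)$, I expand $\pa_x^m G(f)$ by Fa\`a di Bruno's formula; the fractional Leibniz rule (Lemma \ref{lem1:1}) then distributes the remaining $|D_x|^\sigma$ onto one factor. Each $\pa^\alpha G$ ($|\alpha|\le m$) lies in $\Lip(\mu-|\alpha|)$, so the inductive hypothesis handles the factor carrying the fractional derivative, while the polynomial factors are absorbed into the $\|f\|_{L^{q_1}}^{\mu-1}$ norm by H\"older. The main obstacle I anticipate is the borderline case where $s$ is close to $\mu-1$, so that the top derivative of $G$ is only H\"older of a small exponent and the direct Leibniz expansion no longer closes; this is the delicate piece of \cite{MR1124294,MS} and requires a Littlewood--Paley / paraproduct decomposition $G(f)=\sum_j[G(f_{\le j})-G(f_{<j})]$ together with a telescoping argument exploiting the homogeneity $|\pa^\alpha G(z)|\lesssim|z|^{\mu-|\alpha|}$ of $G$ near the origin.
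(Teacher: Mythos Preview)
The paper does not give its own proof of Lemma~\ref{lem1:2}; the result is quoted from \cite{MR1124294,MS} and used as a black box, so there is no in-paper argument to compare your proposal against. Your outline is, however, a faithful summary of the method in those references: the H\"older-in-$t$ reduction to a purely spatial fractional chain rule, the Gagliardo/square-function treatment of the range $0<s<1$ via the pointwise bound $|G(z)-G(w)|\le C\|G\|_{\Lip\mu}(|z|^{\mu-1}+|w|^{\mu-1})|z-w|$ together with maximal-function control, and the Fa\`a di Bruno/Leibniz induction for $s\ge1$. One small imprecision: the genuinely delicate regime is $s\in[N,\mu)$ where $\mu=N+\beta$, $\beta\in(0,1]$, since after $N$ derivatives the factor $\pa^\alpha G$ with $|\alpha|=N$ is only $\beta$-H\"older and one must still apply $|D_x|^{s-N}$ with $0\le s-N<\beta$ to a composition with a merely H\"older function; this (rather than ``$s$ close to $\mu-1$'') is where the paraproduct/telescoping device you mention is actually needed.
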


Finally, we introduce several notations and definitions which we use throughout this paper.
We define $P_1$, $\ol{P}_1$, $P_2$, $\ol{P}_2$, $P'_2$ and $\ol{P}'_2$ by 
\begin{align*}
&P_1=\( \frac1p, \frac{(2-n)p+2n}{2p(p-2)}\), \quad \ol{P}_1 = \( \frac{p-1}{p}, \frac{(p-1)((2-n)p+2n)}{2p(p-2)}\), \\
&P_2=\( \frac{np^2-2p-2n}{2np(p-1)}, \frac{(2-n)p+2n}{4p(p-1)}\), \\
&\ol{P}_2 = \( \frac{3np^2-2(3n+1)p+2n}{2np(p-1)}, \frac{(2p-1)((2-n)p+2n)}{4p(p-1)}\), \\
&P'_2=\( \frac{p-2}{2p(p-1)}, \frac{(2-n)p+2n}{4p(p-1)}\), \\
&\ol{P}'_2 = \( \frac{(p-2)(2p-1)}{2p(p-1)}, \frac{(2p-1)((2-n)p+2n)}{4p(p-1)}\).
\end{align*}
Remark that the point $P_1$ and $\ol{P}_1$ lye on the line $x+2y/n=2/(n(p-2))$ and
$x+2y/n=2/n+2/(n(p-2))$, respectively.
Similarly, $P'_2$ and $\ol{P}'_2$ are on $x+y=1/(2p-2)$ and
$x+2y/n=1/(n(p-1))$, respectively.
Further, $P_1$, $\ol{P}_1$, $P'_2$, and $\ol{P'}_2$ are on the line $y=(((2-n)p+2n)/2(p-2))x$.
The pair given by $P_2$ and the dual of the pair given by $\ol{P}_2$ are admissible.
Namely, $P_2$ is on $x+2y/n=1/2$ and $\ol{P}_2$ is on $x+y=1/2+2/n$.
One has the relation
\begin{equation}\label{eq:exponents}
	\ol{P}_1 -P_1 = \ol{P}_2 - P_2 
	= \ol{P}'_2 - P'_2 = (k_1-1)P_1  = (k_2-1)P'_2  .
\end{equation}
Unfortunately, note that $P'_2 \not\in T$ and $\ol{P}'_2 \not\in T'$ (see Section \ref{sec:2} for $T$ and $T'$).
We put $s_2 = \frac{n}2 -\frac{1}{p-1}$. Let $k_{\mathrm{St}}=\frac{n+2+\sqrt{n^2+12n+4}}{2n}$ and $k_{\mathrm{m}}=1+\frac{4}{n}$. 
For an interval $I\subset \R$ and a point
$\ds P = \(1/q, 1/r \) \in \square = [0,1]^2$,
$L(P;I)$ denotes $L^{r}(I, L^{q}(\R^2))$.
Similarly, we define $\dot{W}^{s}(P;I) = L^{r}(I, \dot{W}^{s,q}(\R^2))$ 
for $I\subset \R$ and $\ds P = \(1/q, 1/r \) \in \square = [0,1]^2$ and $s \in \R$.
We denote $X(I) = L(P_{1};I) \cap \dot{W}^{s_2}(P_2;I)$. 
If $I=\R$, we omit $\R$ and simply write $L(P)=L(P;\R)$, $\dot{W}^s(P)=\dot{W}^s(P;\R)$, 
and $X=X(\R)$. 

\section{Nonlinear estimates} \label{sec:3}

Let $\v \in C^{\infty}_{0}([0,\infty))$ be a cutoff function satisfying $\v\ge0$,
$\v(s) = 1$ for $s\le 1$ and $\v(s)=0$ for $s\ge2$.
We decompose the nonlinearity $F(z)$ given in \eqref{def:F} as $F(z) = F_1(z) + F_2(z)$, where
\[
F_1(z) = \v(|z|)F(z),\quad F_2(z) = (1-\v(|z|))F(z).
\]
Then, by \eqref{eq:Fv},
\begin{align}
|F_i(z)| \le C|z|^{k_{i}} \quad (i=1,2), \label{nin1:2}
\end{align} 
where $k_1 = p-1$ and $k_2 = 2p-1$. 

Lemma \ref{lem1:1} and Lemma \ref{lem1:2} yields nonlinear estimates as follows:
\begin{lem} \label{lem1:3}
Let $0\le \s < k_1$. The estimates
\begin{align}
& \|F_1(u)\|_{L(\overline{P}_1)} \le C \|u\|_{L(P_{1})} \|u\|_{L(P_1)}^{k_1-1}, \label{non1:0} \\
& \|F_2(u)\|_{L(\overline{P}_1)} \le C \|u\|_{L(P_{1})} \Wsn{u}{P_{2}}^{k_2-1}, \label{non1:1} \\
& \norm{F_1(u)}_{\dot{W}^\s(\overline{P}_2)} \le C \norm{u}_{\dot{W}^\s(P_{2})}\|u\|_{L(P_{1})}^{k_1-1}, \label{non1:2} \\
& \norm{F_2(u)}_{\dot{W}^\s(\overline{P}_2)} \le C \norm{u}_{\dot{W}^\s(P_{2})}\norm{u}_{\dot{W}^{s_2}(P_{2})}^{k_2-1} \label{non1:3} 
\end{align}
hold for any $u \in X \cap \dot{W}^\s (P_2)$. 
\end{lem}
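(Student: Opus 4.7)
The plan is to derive all four estimates from the pointwise bounds \eqref{nin1:2}, H\"older's inequality, the Sobolev embedding $\dot{W}^{s_2,r_2}(\R^n)\hookrightarrow L^{r'_2}(\R^n)$, and the fractional chain rule of Lemma \ref{lem1:2}. The algebraic backbone is the identity \eqref{eq:exponents}, together with the fact that $P_2$ and $P'_2$ share the same temporal exponent while their spatial exponents differ by exactly the Sobolev gap $\tfrac{1}{r_2}-\tfrac{s_2}{n}=\tfrac{1}{r'_2}$ (a one-line algebraic check from the definitions in Section \ref{sec:2}).

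For the derivative-free estimates I would argue as follows. For \eqref{non1:0}, the pointwise bound $|F_1(u)|\le C|u|^{k_1}$ combined with $\overline{P}_1=k_1 P_1$ (which is $\overline{P}_1-P_1=(k_1-1)P_1$ rewritten) gives $\|F_1(u)\|_{L(\overline{P}_1)}\le C\|u\|_{L(P_1)}^{k_1}$ directly. For \eqref{non1:1}, use $|F_2(u)|\le C|u|^{k_2}$, split $|u|^{k_2}=|u|\cdot|u|^{k_2-1}$, apply H\"older with $\overline{P}_1-P_1=(k_2-1)P'_2$ to get $\|u\|_{L(P_1)}\|u\|_{L(P'_2)}^{k_2-1}$, and then convert $\|u\|_{L(P'_2)}$ to $\|u\|_{\dot{W}^{s_2}(P_2)}$ via the Sobolev embedding.

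For the derivative estimates \eqref{non1:2} and \eqref{non1:3} with $\sigma>0$, I would invoke Lemma \ref{lem1:2} with $\mu=k_1$ and $\mu=k_2$ respectively, taking $s=\sigma$. The hypotheses $\mu>1$ (from $p>1+k_{\mathrm{St}}>2$) and $s\in(0,\mu)$ (since $\sigma<k_1\le\mu$) are satisfied. The exponent matching is supplied by $\overline{P}_2-P_2=(k_1-1)P_1=(k_2-1)P'_2$ from \eqref{eq:exponents}: for \eqref{non1:2} this yields $\|u\|_{L(P_1)}^{k_1-1}\|u\|_{\dot{W}^{\sigma}(P_2)}$, and for \eqref{non1:3} it yields $\|u\|_{L(P'_2)}^{k_2-1}\|u\|_{\dot{W}^{\sigma}(P_2)}$, with one final Sobolev step to replace $L(P'_2)$ by $\dot{W}^{s_2}(P_2)$. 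The borderline case $\sigma=0$ of \eqref{non1:2} and \eqref{non1:3} lies outside the scope of Lemma \ref{lem1:2}, but since $\dot{W}^0(P_2)=L(P_2)$ it follows from the same pointwise-plus-H\"older argument as above, with the splitting $|u|^{k_i}=|u|^{k_i-1}\cdot|u|$ and the identities from \eqref{eq:exponents}.

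The main technical obstacle is to justify that $F_1\in\Lip k_1$ and, more delicately, $F_2\in\Lip k_2$, so that Lemma \ref{lem1:2} may be applied as stated. The power $|w|^{p-2}$ appearing in $F$, with $w=|v|^2+2\re(v)=|v+1|^2-1$, is only H\"older-smooth of a finite order limited by $p-1$, and its singular locus $\{w=0\}$ is the circle $|v+1|=1$, which meets both the support of $F_1$ (where $|v|\le 2$) and the bounded portion of the support of $F_2$. A direct bound of the top-order derivatives of $F_2$ by $|z|^{k_2-|\alpha|}$ therefore does not hold as stated, and a workable remedy is to decompose $F_2$ further into a piece supported away from the singular circle (which is smooth and genuinely in $\Lip k_2$) plus a remainder of compact support that is handled via a $\Lip\mu$ estimate of lower order $\mu\le k_1$ combined with an extra H\"older step. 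Once this Lipschitz verification is in place, the exponent bookkeeping above produces the four estimates without further difficulty.
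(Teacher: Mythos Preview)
Your argument tracks the paper's proof closely: both use the identities in \eqref{eq:exponents} for the H\"older bookkeeping, the Sobolev embedding $\dot W^{s_2}(P_2)\hookrightarrow L(P_2')$, and Lemma~\ref{lem1:2} for the fractional chain rule. The paper writes out \eqref{non1:1} and \eqref{non1:2} explicitly and then asserts that \eqref{non1:0} and \eqref{non1:3} ``follow in a similar way.''

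Your hesitation about $F_2\in\Lip k_2$ is justified, and in fact this is a point the paper simply passes over. The singular locus $\{|z+1|=1\}$ meets $\{|z|\ge1\}=\supp F_2$ along the arc through $z=-2$, and there $F$ has only $\Lip(p-1)$ regularity; since $k_2=2p-1>p-1$ for every $p$ in the stated range, $F_2\notin\Lip k_2$ and Lemma~\ref{lem1:2} with $\mu=k_2$ cannot be invoked directly. Your proposed remedy---splitting off a compactly supported piece $F_{2b}$ near the singular arc and treating it via Lemma~\ref{lem1:2} with $\mu=k_1$ (using $\overline{P}_2=P_2+(k_1-1)P_1$), while the smooth remainder $F_{2a}$ supported away from the arc is handled with $\mu=k_2$---is correct. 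The only caveat is that this yields
\[
\norm{F_2(u)}_{\dot W^\sigma(\overline{P}_2)}\le C\norm{u}_{\dot W^\sigma(P_2)}\bigl(\norm{u}_{L(P_1)}^{k_1-1}+\norm{u}_{\dot W^{s_2}(P_2)}^{k_2-1}\bigr)
\]
rather than \eqref{non1:3} as stated. Since every use of \eqref{non1:2}--\eqref{non1:3} in the paper immediately combines them into a bound by $\norm{u}_{\dot W^\sigma(P_2)}(\norm{u}_X^{k_1-1}+\norm{u}_X^{k_2-1})$, this weaker form suffices for all downstream applications. The same issue and the same fix apply to \eqref{non2:4} in Lemma~\ref{lem1:4}.
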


\begin{lem} \label{lem1:4}
Let $0\le \s < k_1 -1 $. The following four estimates hold
for any $u$, $\wt{u} \in X \cap \dot{W}^\s (P_2):$
\begin{equation}\label{non2:1}
	\Lebn{F_1(u)-F_1(\wt{u})}{\ol{P}_1} 
	\le C\Lebn{u-\wt{u}}{P_1}\( \Lebn{u}{P_1}^{k_1-1} + \Lebn{\wt{u}}{P_1}^{k_1-1} \),
\end{equation} 
\begin{equation}\label{non2:2}
	\Lebn{F_2(u)-F_2(\wt{u})}{\ol{P}_1} 
	\le C\Lebn{u-\wt{u}}{P_1}\( \Wsn{u}{P_2}^{k_2-1} + \Wsn{\wt{u}}{P_2}^{k_2-1} \),
\end{equation} 
\begin{multline}\label{non2:3}
	\norm{F_1(u)-F_1(\wt{u})}_{\dot{W}^{\s}(\ol{P}_2)}  \\
	\le C\Lebn{u-\wt{u}}{P_1}\( \Lebn{u}{P_1}^{k_1-2} + \Lebn{\wt{u}}{P_1}^{k_1-2} \)\( \Wsin{u}{P_2} + \Wsin{\wt{u}}{P_2} \) \\
	C\norm{u-\wt{u}}_{\dot{W}^\s(P_2)}\( \Lebn{u}{P_1}^{k_1-1} + \Lebn{\wt{u}}{P_1}^{k_1-1} \),
\end{multline} 
and
\begin{multline} \label{non2:4}
	\norm{F_2(u)-F_2(\wt{u})}_{\dot{W}^{\s}(\ol{P}_2)}  \\
	\le{} C\norm{u-\wt{u}}_{\dot{W}^{s_2}(P_2)}\( \Wsn{u}{P_2}^{k_2-2} + \Wsn{\wt{u}}{P_2}^{k_2-2} \)\( \Wsin{u}{P_2} + \Wsin{\wt{u}}{P_2} \) \\
	{}+C\Wsin{u-\wt{u}}{P_2}\( \Wsn{u}{P_2}^{k_2-1} + \Wsn{\wt{u}}{P_2}^{k_2-1} \) .
\end{multline} 
\end{lem}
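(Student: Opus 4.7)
The strategy is to derive all four estimates from the mean value identity
\[
F_i(u) - F_i(\wt u) = \int_0^1 \bigl( F_{i,z}(u_\t)(u-\wt u) + F_{i,\bar z}(u_\t)\overline{(u-\wt u)} \bigr)\,d\t, \qquad u_\t := \wt u + \t(u-\wt u),
\]
combined with the pointwise bound $|F_{i,z}(z)| + |F_{i,\bar z}(z)| \le C|z|^{k_i - 1}$ which follows directly from the cutoff definition of $F_i$ together with $|F(z)| = O(|z|^{k_i})$. The hypothesis $1 + k_{\mathrm{St}} < p$ forces $k_1 - 1 > 1$ in the cases $n = 1, 2$, so $F_{i,z}, F_{i,\bar z} \in \Lip(k_i - 1)$ and Lemma \ref{lem1:2} is applicable with $\mu = k_i - 1$ and $s = \sigma$, since $\sigma < k_1 - 1 \le k_2 - 1$.

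First I would prove \eqref{non2:1} and \eqref{non2:2}. Taking the $L^1$-norm in $\t$ and applying H\"older's inequality to the pointwise consequence
\[
|F_i(u) - F_i(\wt u)| \le C|u - \wt u|\,(|u| + |\wt u|)^{k_i - 1}
\]
reduces the claim to a mixed Lebesgue estimate. For $i = 1$ the exponent identity $\ol P_1 - P_1 = (k_1 - 1)P_1$ from \eqref{eq:exponents} closes the H\"older computation against the $L(P_1)$ norm. For $i = 2$ the identity $\ol P_1 - P_1 = (k_2 - 1)P'_2$, together with the Sobolev embedding $\dot W^{s_2}(P_2) \hookrightarrow L(P'_2)$ already used in the proof of \eqref{non1:1}, produces the $\dot W^{s_2}(P_2)$-factor on the right-hand side.

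The Sobolev-type estimates \eqref{non2:3} and \eqref{non2:4} follow by applying $|D_x|^\sigma$ under the integral and splitting the derivative via the fractional Leibniz rule of Lemma \ref{lem1:1}:
\[
\bigl\||D_x|^\sigma[F_{i,z}(u_\t)(u-\wt u)]\bigr\| \lesssim \bigl\||D_x|^\sigma F_{i,z}(u_\t)\bigr\|\,\|u-\wt u\| + \|F_{i,z}(u_\t)\|\,\bigl\||D_x|^\sigma(u-\wt u)\bigr\|.
\]
The piece in which $|D_x|^\sigma$ lands on $u - \wt u$ is handled by the pointwise bound on $F_{i,z}$ together with the same exponent relations used for \eqref{non1:0}--\eqref{non1:1}; it produces the second right-hand side term of each inequality. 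On the piece where $|D_x|^\sigma$ lands on $F_{i,z}(u_\t)$, Lemma \ref{lem1:2} with $G = F_{i,z}$, $\mu = k_i - 1$ and $s = \sigma$ gives
\[
\bigl\||D_x|^\sigma F_{i,z}(u_\t)\bigr\| \le C\|u_\t\|^{k_i - 2}\,\bigl\||D_x|^\sigma u_\t\bigr\|.
\]
Combining this with the trivial interpolation bound $\|u_\t\|^\alpha \le C(\|u\|^\alpha + \|\wt u\|^\alpha)$ and integrating in $\t$ produces the first right-hand side term, and thus completes the proof.

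The main obstacle I expect is purely bookkeeping: each invocation of Lemmas \ref{lem1:1} and \ref{lem1:2} imposes four linear relations on the spatial/temporal integrability exponents, and one has to check that these relations close exactly on the quadruple of points $P_1, P'_2, P_2, \ol P_2$ dictated by \eqref{eq:exponents}. These identities were arranged precisely so that the closure succeeds in the non-difference estimates of Lemma \ref{lem1:3}; the difference versions follow from the same exponent algebra, the only genuinely new feature being the doubled number of intermediate terms generated by the mean value identity.
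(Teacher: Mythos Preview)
Your proposal is correct and follows essentially the same route as the paper: the mean value identity $F_i(u)-F_i(\wt u)=(u-\wt u)\int_0^1 \partial_z F_i(u_\t)\,d\t+\overline{(u-\wt u)}\int_0^1 \partial_{\bar z}F_i(u_\t)\,d\t$, the fractional Leibniz rule (Lemma~\ref{lem1:1}) to distribute $|D_x|^\sigma$, the fractional chain rule (Lemma~\ref{lem1:2}) applied to $\partial_z F_i,\partial_{\bar z}F_i\in\Lip(k_i-1)$, and the exponent relations \eqref{eq:exponents} together with the Sobolev embedding $\dot W^{s_2}(P_2)\hookrightarrow L(P_2')$. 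The paper makes the exponent bookkeeping explicit for \eqref{non2:4} via the decomposition $\ol P_2 = P_2' + (k_2-2)P_2' + P_2 = P_2 + (k_2-1)P_2'$ and states that \eqref{non2:3} uses $\ol P_2 = P_1 + (k_1-2)P_1 + P_2$ instead, which is exactly the closure you anticipate.
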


\begin{proof}[Proof of Lemma \ref{lem1:3}]
First, we show \eqref{non1:1}. Since
$\overline{P_1} = P_1+(k_2-1)P'_2$ by \eqref{eq:exponents},
it follows from $\dot{W}^{s_2}(P_{2}) \hookrightarrow L(P'_2)$ that 
\begin{align*}
	\Lebn{F_2(u)}{\ol{P_1}} &\le \Lebn{u}{P_1}\Lebn{u}{P'_2}^{k_2-1} \\
	&\le C\Lebn{u}{P_1}\Wsn{u}{P_2}^{k_2-1}.
\end{align*}
The estimate \eqref{non1:0} follows in a similar way.

Let us next show \eqref{non1:2}.
Since we have
$\overline{P_2} = P_2+(k_1-1)P_1$ by \eqref{eq:exponents}, 
it follows from Lemma \ref{lem1:2} that
\begin{align*}
	\norm{F_1(u)}_{\dot{W}^\s({\overline{P}_2})} &\le C\norm{F_1}_{\Lip(k_1)}\norm{u}_{\dot{W}^\s(P_{2})} \|u\|_{L(P_{1})}^{k_1-1} \\
	&\le C\norm{u}_{\dot{W}^\s(P_{2})} \|u\|_{L(P_{1})}^{k_1-1},
\end{align*}
which implies \eqref{non1:2}.
The last inequality \eqref{non1:3} follows in a similar way.
\end{proof}

\begin{proof}[Proof of Lemma \ref{lem1:4}]
The proofs of \eqref{non2:1} and \eqref{non2:2} are similar to that of \eqref{non1:0} an \eqref{non1:1},
respectively. We omit the details.
Let us prove \eqref{non2:4}.
Note that 
\begin{align*}
	F_i(u)-F_i(\wt{u}) ={}& (u-\wt{u}) \int_{0}^{1} \pa_{z}F_1(\t u + (1-\t)\wt{u}) d\t \\
	&+ \overline{(u-\wt{u})} \int_{0}^{1} \pa_{\bar{z}}F_1(\t u + (1-\t)\wt{u}) d\t \\
	=:{}& I_{i} +J_{i}
\end{align*}
for $i=1$, $2$. 
Using the relation
$\ol{P}_2 = P_2' + (k_2-2)P'_2 + P_2 = P_2 + (k_1-1)P_2'$,
Lemma \ref{lem1:1}, and Lemma \ref{lem1:2}, one sees that
\begin{align*}
	&\Wsin{I_2}{\ol{P}_2} \\
	\le{}& C\Lebn{u-\wt{u}}{P_2'}\int_{0}^{1} \Lebn{|D_x|^\s \{ \pa_{z}F_2(\t u + (1-\t)\wt{u}) \} }{(k_2-2)P_2' +P_2} d\t \\
	&+ C\Lebn{|D_x|^\s (u-\wt{u})}{P_2}\int_{0}^{1} \Lebn{\pa_{z}F_2(\t u + (1-\t)\wt{u})}{(k_2-1)P_2'} d\t \\
	\le{}& C\Lebn{u-\wt{u}}{P_2'}\int_{0}^{1} \norm{\pa_z F_2}_{\Lip(k_2-1)} \\ 
	& \times \Lebn{ \t u + (1-\t)\wt{u} }{P_2'}^{k_2-2} \Lebn{|D_x|^{\s} (\t u + (1-\t)\wt{u}) }{P_2} d\t \\
	& + C\Lebn{|D_x|^\s (u-\wt{u})}{P_2}\int_{0}^{1} \Lebn{\t u + (1-\t)\wt{u}}{P_2'}^{k_2-1} d\t \\
	\le{}& C\Lebn{u-\wt{u}}{P_2'}\( \Lebn{u}{P_2'}^{k_2-2} + \Lebn{\wt{u}}{P_2'}^{k_2-2} \)\( \Wsin{u}{P_2} + \Wsin{\wt{u}}{P_2} \) \\
	& +C\Wsin{u-\wt{u}}{P_2}\( \Lebn{u}{P_2'}^{k_2-1} + \Lebn{\wt{u}}{P_2'}^{k_2-1} \) .
\end{align*}
Hence this term is handled by the Sobolev embedding.
In the same way, it holds that 
\begin{align*}
	\Wsin{J_2}{\ol{P}_2} \le{}& C\norm{u-\wt{u}}_{\dot{W}^{s_2}(P_2)}\( \Wsn{u}{P_2}^{k_2-2} + \Wsn{\wt{u}}{P_2}^{k_2-2} \) \\ 
	&{}\times \( \Wsin{u}{P_2} + \Wsin{\wt{u}}{P_2} \) \\
	&{}+C\Wsin{u-\wt{u}}{P_2}\( \Wsn{u}{P_2}^{k_2-1} + \Wsn{\wt{u}}{P_2}^{k_2-1} \).
\end{align*}
These yield \eqref{non2:4}. Similarly, we can show \eqref{non2:3}. We only remark that we use the relation 
$\ol{P}_2 = P_1 + (k_1-2)P_1 + P_2 = P_2 + (k_1-1)P_1$ instead.
\end{proof}

\section{Local well-posedness}\label{sec:4}
In this section, we establish a weak version of local well-posedness type result for the equation
\begin{equation}\label{eq:aeq}
	v(t) = V(t) -i \int_{t_0}^t U(t-s) F(v(s))ds,
\end{equation}
where $t_0 \in \R$ is the initial time and $V(t) \in X_{\mathrm{loc}}(\R)$ is a given function.
We call $V(t)$ as a \emph{guide flow}.
\begin{dei}
We say a function $v(t)$ is a \emph{solution to \eqref{eq:aeq} associated with an initial time $t_0$ and a guide flow $V(t)$ on an interval $I$} if 
$t_0$ is in the closure of $I$, $v\in X_{\mathrm{loc}}(I)$, and \eqref{eq:aeq} holds in $X_{\mathrm{loc}}(I)$ sense.
\end{dei}
\begin{rem}\label{rmk:guideflow}
A typical example of guide flow is a linear flow $U(t-t_0)v_0$.
Then, \eqref{eq:aeq} becomes \eqref{iggp}.
Another example is $V(t)=U(t-t_0)v_0 - i\int_{t_0}^t U(t-s)e(s) ds$ for some function $e(t)$.
Then, the equation \eqref{eq:aeq} corresponds to \eqref{ggp} with error; $i\partial_t v+ \Delta v = F(v) +e$.
By using the above formulation, we can handle these examples in a unified way.
\end{rem}
\begin{rem}
Any solution to \eqref{eq:aeq} associated with $t_0$ and $V$ on $I$ satisfies
$v-V \in C(I,\dot{H}^{s_2})$. Indeed, for any compact $J \subset I$, we have $v\in X(J)$.
By means of \eqref{non1:2} and \eqref{non1:3}, this implies $F(v) \in \dot{W}^{s_2}(\ol{P}_2;J)$.
Then, Strichartz estimate shows
\[
	\norm{v-V}_{L^\I(J,\dot{H}^{s_2})} = 
	\norm{\int_{t_0}^t U(t-s)F(u(s))ds}_{L^\I(J,\dot{H}^{s_2})} \le C \norm{F(v)}_{\dot{W}^{s_2}(\ol{P}_2;J)}.
\]
Hence, $v-V \in C(J,\dot{H}^{s_2})$.
\end{rem}
The heart of the analysis of \eqref{eq:aeq} is summarized as follows.
\begin{pro}\label{prop1:2}
Let $\wt{v}(t)$ be a solution to \eqref{eq:aeq} associated with $t_0\in \R$ and a guide flow $\wt{V}(t) \in X_{\mathrm{loc}}(\R)$ on an interval $I\ni t_0$. 
Suppose that $\norm{\wt{v}}_{X(I)} \le M$.
Then, there exists $\eta_1=\eta_1(M)>0$ such that if a guide flow $V(t) \in X_{\mathrm{loc}}(\R)$ satisfies
\[
	\eta:= \tnorm{V-\wt{V}}_{X(I)} \le \eta_1
\]
then there exists a unique solution ${v}(t)$ to \eqref{eq:aeq} associated with $t_0$ and $V(t)$ on the same interval $I$.
Further, the solution satisfies $v-\wt{v}-(V-\wt{V})\in C(I, \dot{H}^{s_2})$ and
\[
	\norm{v-\wt{v}}_{X(I)} + \tnorm{v-\wt{v}-(V-\wt{V})}_{L^\I(I,\dot{H}^{s_2})} \le C \eta.
\]
\end{pro}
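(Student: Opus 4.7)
\textbf{Proof proposal for Proposition \ref{prop1:2}.}
The plan is to reformulate the problem for the difference $w := v - \wt{v}$, for which the integral equation reads
\[
w(t) = (V(t)-\wt{V}(t)) - i\int_{t_0}^t U(t-s)\bigl[F(\wt{v}(s)+w(s))-F(\wt{v}(s))\bigr]\,ds,
\]
and to solve this by a Banach fixed-point argument in a small ball of $X(J)$, on a subinterval $J\subset I$. Calling the right-hand side $\Phi(w)$, one applies the non-admissible Strichartz estimates of Proposition \ref{stri1:1} with the pairs $(P_1,\ol{P}_1)$ and $(P_2,\ol{P}_2)$ and then plugs in the difference estimates \eqref{non2:1}--\eqref{non2:4} from Lemma \ref{lem1:4} (used with $\s=s_2$, which satisfies $s_2<k_1-1$ in the regime $1+k_{\mathrm{St}}<p<1+k_{\mathrm{m}}$). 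Schematically, this yields
\[
\norm{\Phi(w)}_{X(J)} \le C\eta + C\bigl(\norm{\wt{v}}_{X(J)}+\norm{w}_{X(J)}\bigr)^{k_1-1}\norm{w}_{X(J)} + \bigl(\text{higher-order term in }k_2\bigr),
\]
together with an analogous estimate on $\Phi(w_1)-\Phi(w_2)$. Hence, if $\norm{\wt{v}}_{X(J)}\le \varepsilon_0$ for $\varepsilon_0=\varepsilon_0$ small enough, $\Phi$ is a contraction on the ball $\{\,\norm{w}_{X(J)}\le 2C\eta\,\}$ provided $\eta$ is also small. Uniqueness in $X(J)$ follows from the same difference bound.

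Because the hypothesis only gives $\norm{\wt{v}}_{X(I)}\le M$, not smallness, the second step is to partition $I$ into a finite number $N=N(M,\varepsilon_0)$ of consecutive subintervals $I_1,\dots,I_N$ with $\norm{\wt{v}}_{X(I_j)}\le \varepsilon_0$ and to iterate the fixed-point construction across them. On $I_{j+1}$, the problem is rewritten with initial time $t_j=\sup I_j$ and updated guide flow
\[
V_{j+1}(t) = V(t) - i\int_{t_0}^{t_j} U(t-s)F(v(s))\,ds, \qquad \wt{V}_{j+1}(t) = \wt{V}(t) - i\int_{t_0}^{t_j} U(t-s)F(\wt{v}(s))\,ds,
\]
so that the equation on $I_{j+1}$ has the same form as \eqref{eq:aeq} (cf.\ Remark \ref{rmk:guideflow}). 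Applying Strichartz to the difference $V_{j+1}-\wt{V}_{j+1}$ and invoking \eqref{non2:1}--\eqref{non2:4} together with the bound already obtained on $I_j$ shows that $\norm{V_{j+1}-\wt{V}_{j+1}}_{X(I_{j+1})}\le C_j\eta$ for some constant $C_j$ depending only on $M$ and previously computed constants. Choosing $\eta_1=\eta_1(M)$ so small that $C_N\eta_1\le \eta_1(\varepsilon_0)$ at every step allows the argument to close on all $N$ subintervals; concatenating the pieces yields a unique solution $v\in X_{\mathrm{loc}}(I)$ on the whole interval $I$ with $\norm{v-\wt{v}}_{X(I)}\le C\eta$.

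For the promised continuity in $\dot{H}^{s_2}$, observe that
\[
v-\wt{v}-(V-\wt{V}) = -i\int_{t_0}^t U(t-s)\bigl[F(v(s))-F(\wt{v}(s))\bigr]\,ds.
\]
Applying the admissible Strichartz estimate with the pair dual to $\ol{P}_2$ and controlling the right-hand side by the $\dot{W}^{s_2}(\ol{P}_2)$-estimates in \eqref{non2:3}--\eqref{non2:4} (with $\s=s_2$) produces the bound $\norm{v-\wt{v}-(V-\wt{V})}_{L^\infty(I,\dot{H}^{s_2})}\le C\eta$, and continuity follows from the density/dominated-convergence argument standard for Duhamel terms. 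I expect the main technical obstacle to be bookkeeping during the iteration: each subinterval contributes a multiplicative constant from Strichartz and from Lemma \ref{lem1:4}, so the smallness threshold $\eta_1$ must be chosen at the end after determining $N=N(M)$ and the compounded constant $C^N$; the choice is valid because $N$ depends only on $M$ (through $\varepsilon_0$ and $\norm{\wt{v}}_{X(I)}$), not on $\eta$.
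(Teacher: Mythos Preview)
Your proposal is correct and follows essentially the same strategy as the paper: reformulate in terms of $w=v-\wt{v}$, subdivide $I$ into finitely many pieces on which $\norm{\wt{v}}_{X(I_j)}$ is small, apply the non-admissible Strichartz estimates together with Lemma~\ref{lem1:4} (with $\s=s_2$), and iterate. The only difference is in the bookkeeping of the iteration: you restart at each $t_j$ with updated guide flows $V_{j+1},\wt V_{j+1}$, whereas the paper keeps the original map $\Phi$ throughout and instead works in nested spaces $X_j$ that freeze $w$ on $\cup_{k<j}I_k$, tracking explicit sequences $a_j,b_j$ for the accumulated bounds; the paper also runs the contraction in the weaker metric $L(P_1)$ alone (using only \eqref{non2:1}--\eqref{non2:2}), which is a minor simplification but not essential. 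Both organizations lead to the same smallness threshold $\eta_1=\eta_1(M)$ depending on the number of subintervals, exactly as you anticipated.
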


\begin{proof}
We prove the result with replacing $I$ with $ I \cap \{t > t_0\} $ since the other case $ I \cap \{t < t_0\} $
is handled in the same way.
Hence, we may suppose that $t_0 = \inf I$.

Let $m>0$ be a small number to be chosen later.
Then, there exists a subdivision $\{I_j\}_{j=1}^J$ of $I$ such that $J=J(m,M)\ge1$, $I_j = (t_{j-1},t_j)$ for $j\le J-1$,
$I_J=(t_{J-1},\sup I)$, and that $\sup_j \norm{\wt{v}}_{X(I_j)} \le m$.
Set $w(t):={v}(t)-\wt{v}(t)$ and $W(t):=V(t)-\wt{V}(t)$. Then, $w(t)$ solves
\begin{equation}\label{eq:lts_pf1}
	w(t) = W(t) -i \int^{t}_{t_0} U(t-s) (F(w(s)+\wt{v}(s))-F(\wt{v}(s))) ds,
\end{equation}
at least formally. We now regard \eqref{eq:lts_pf1} as an equation with respect to $w$.
  
Let us show that there exists a unique function $w\in X(I)$ satisfying \eqref{eq:lts_pf1} in $X(I)$.
We use an induction argument. 
Introduce a map
\[
\P(w)(t) := W(t) -i \int^{t}_{t_0} U(t-s) (F(w(s)+\wt{v}(s))-F(\wt{v}(s))) ds.
\]
Let $\{a_j\}_{1\le j \le J}$ and $\{b_j\}_{1\le j \le J}$ be two sequences to be determined later.
Define
\begin{align*}
X_{1}:=&\{u \in X(I_1) \ |\ \norm{u}_{X(I_1)} \le a_1 \},& d_{X_1}(u, \wt{u} ) :=& \norm{u-\wt{u}}_{L(P_1;I_1)}
\end{align*}
and
\begin{align*}
	&X_j := \left\{ u \in X(\cup_{k=1}^j I_k) \middle| 
	\begin{aligned}
	& u(t) = w(t) \text{ on } \cup_{k=1}^{j-1} I_k, \\
	&\tnorm{F(u+\wt{v})-F(\wt{v})}_{L(\ol{P}_1;\cup_{k=1}^{j-1} I_k) \cap \dot{W}^{s_2}(\ol{P}_2;\cup_{k=1}^{j-1} I_k)} \le b_{j}, \\
	& \norm{u}_{X(\cup_{k=1}^j I_k) } \le a_j.
	\end{aligned}
	\right\}, \\
	&d_{X_j}(u, \wt{u} ) := \norm{u-\wt{u}}_{L(P_1; \cup_{k=1}^j I_k)}.
\end{align*}
for $2\le j \le J$, where $w(t)$ is a solution to \eqref{eq:lts_pf1} on $\cup_{k=1}^{j-1} I_k$.
Our strategy is as follows.
We first construct a solution $w(t)$ on $I_1$ by applying the contraction mapping principle in $X_1$.
Once a solution $w(t)$ is given on $\cup_{k=1}^{j-1}I_k$ for some $j\ge2$, the space $X_j$ is well-defined.
Then, we extend $w(t)$ to the interval $\cup_{k=1}^{j}I_k$
by proving that $\P:\ X_{j} \rightarrow X_{j}$ is a contraction map.

We consider $j\ge2$. Assume that a solution $w(t)$ exists on $\cup_{k=1}^{j-1}I_k$ and that the solution satisfies
$\tnorm{F(w+\wt{v})-F(\wt{v})}_{L(\ol{P}_1;\cup_{k=1}^{j-1} I_k) \cap \dot{W}^{s_2}(\ol{P}_2;\cup_{k=1}^{j-1} I_k)} \le b_{j}$.
Let $u \in X_j$. We have $u=w(t)$ on $t\in \cup_{k=1}^{j-1}I_k$ and so
\begin{equation}\label{eq:aeq_pf1}
	\P(u)(t) = W(t) -i \int^{t}_{t_0} U(t-s) (F(w(s)+\wt{v}(s))-F(\wt{v}(s))) ds = w(t)
\end{equation}
for $t\in \cup_{k=1}^{j-1}I_k$. Then, together with the assumption on $w(t)$, we see that
\begin{equation}\label{eq:aeq_pf2}
	\tnorm{F(\P(u)+\wt{v})-F(\wt{v})}_{L(\ol{P}_1;\cup_{k=1}^{j-1} I_k) \cap \dot{W}^{s_2}(\ol{P}_2;\cup_{k=1}^{j-1} I_k)} \le b_{j}
\end{equation}
Further,
it follows from Strichartz' estimate, the assumption on $W$, and the definition of $X_j$ that
\begin{equation}\label{eq:aeq_pf3}
\begin{aligned}
	\norm{\P(u)}_{X(\cup_{k=1}^{j} I_k)} \le{}& \norm{W}_{X(\cup_{k=1}^{j} I_k)}\\
	&{}+ C \tnorm{F(u+\wt{v})-F(\wt{v})}_{L(\ol{P}_1;\cup_{k=1}^{j} I_k) \cap \dot{W}^{s_2}(\ol{P}_2;\cup_{k=1}^{j} I_k)} \\
	\le{}& \eta + C_1 b_j + 
	C_1\tnorm{F(u+\wt{v})-F(\wt{v})}_{L(\ol{P}_1;I_j) \cap \dot{W}^{s_2}(\ol{P}_2;I_j)}.
\end{aligned}
\end{equation}
Without loss of generality, we may suppose that $C_1 \ge 1$.
Using Lemma \ref{lem1:4} and Young's inequality and letting $m$ small enough, we obtain
\begin{align*}
	&C_1\tnorm{F(u+\wt{v})-F(\wt{v})}_{L(\ol{P}_1;I_j) \cap \dot{W}^{s_2}(\ol{P}_2;I_j)}\\
	&\le C \norm{u}_{X(I_j)} (\norm{u}_{X(I_j)}^{k_1-1}+\norm{\wt{v}}_{X(I_j)}^{k_1-1}+ \norm{u}_{X(I_j)}^{k_2-1}+\norm{\wt{v}}_{X(I_j)}^{k_2-1}  ) \\
	&{}\le \frac14 \norm{u}_{X(I_j)} + C_2(m) \norm{u}_{X(I_j)}^{k_2}.
\end{align*}
Fix such $m$. Then, as long as $a_j \le (4C_2)^{-1/(k_2-1)}$,
\begin{equation}\label{eq:aeq_pf4}
 C_1\tnorm{F(u+\wt{v})-F(\wt{v})}_{L(\ol{P}_1;I_j) \cap \dot{W}^{s_2}(\ol{P}_2;I_j)}
 \le \frac12 \norm{u}_{X(I_j)} \le \frac{1}{2} a_j.
\end{equation}
Combining \eqref{eq:aeq_pf1}--\eqref{eq:aeq_pf4}, we show that $\P: X_j \to X_j$ if 
\begin{equation}\label{eq:aeq_pf5}
	2(\eta + C_1 b_j) \le a_j \le (4C_2)^{1/(k_2-1)}.
\end{equation}
Remark that the condition works also for $j=1$ with the choice $b_1=0$. 

We will show that $\P$ is a contraction map. By \eqref{eq:aeq_pf1},
\[
	\P(u_1)-\P(u_2) = -i\int_{t_{j-1}}^t U(t-s) (F(u_1+\wt{v})-F(u_2+\wt{v})) ds
\]
for $u_1,u_2 \in X_j$.
A use of \eqref{non2:1} and \eqref{non2:2} then shows
\[
	d_{X_j}(\P(u_1),\P(u_2)) \le C_3\norm{u_1-u_2}_{L(P_1;I_j)} (a_j^{k_1-1} + a_j^{k_2-1} + m^{k_1-1}+ m^{k_2-1})
\]
We let $m$ even small so that $C_3(m^{k_1-1}+ m^{k_2-1}) \le 1/3$, if necessary.
Then, $\P: X_j\to X_j$ is contraction if 
\begin{equation}\label{eq:aeq_pf6}
	a_j \le \min (1, (6C_3)^{-\frac{1}{k_1-1}}).
\end{equation}

Thus, if \eqref{eq:aeq_pf5} and \eqref{eq:aeq_pf6} are satisfied then $\P:X_j\to X_j$ is a contraction and so 
we obtain a solution $w(t) \in X_j \subset X(\cup_{k=1}^{j}I_k)$ to \eqref{eq:lts_pf1} on $\cup_{k=1}^j I_{k}$.
For the next step of the induction, we shall define $b_{j+1}$.
By the assumption of the induction, $C_1\ge1$, and \eqref{eq:aeq_pf4}, we have
\[
	\norm{F(w+\wt{v})-F(\wt{v})}_{X(\cup_{k=1}^j I_k)} 
	\le b_j + \frac12 a_j.
\]
Hence, it suffices to take
\begin{equation}\label{eq:aeq_pf7}
	b_j + \frac12 a_j \le b_{j+1}.
\end{equation}

Now, the proof is completed if we are able to choose two sequences $\{a_j\}_{j}$ and $\{b_j\}_{j}$ so that
\eqref{eq:aeq_pf5}, \eqref{eq:aeq_pf6}, and \eqref{eq:aeq_pf7} are satisfied.
Recall that $b_1=0$.
We take $a_j=2(\eta+C_1 b_j)$ and $b_{j+1}+\eta = (C_1+1)(b_j+\eta)$, or more explicitly,
\[
	a_j = 2(C_1(C_1+1)^{j-1}-C_1+1) \eta ,\quad b_j = ((C_1+1)^{j-1}-1) \eta .
\]
Notice that $a_j$ is increasing. Thus, one sees that if $\eta>0$ is taken so small that
\[
	 \eta \le \frac{\min ((4C_2)^{1/(k_2-1)},1,(6C_3)^{-\frac{1}{k_1-1}})}{2(C_1(C_1+1)^{J-1}-C_1+1)} = : \eta_1
\]
then the conditions \eqref{eq:aeq_pf5}, \eqref{eq:aeq_pf6}, and \eqref{eq:aeq_pf7} are satisfied for all $j\in[1,J]$.
Thus, there exists a unique solution $v(t)$ to \eqref{eq:aeq} associated with $t_0$ and $V(t)$ on $I$.
Furthermore the solution satisfies
$\norm{v-\wt{v}}_{X(I)} \le C\eta$.

Let us finally estimate $w-W$. By using \eqref{eq:lts_pf1} and the Strichartz estimate,
\begin{align*}
	&\norm{w-W}_{L^\I(I, \dot{H}^{s_2})}\\
	&\le C \norm{F(w+\wt{v})-F(\wt{v})}_{ \dot{W}^{s_2}(\ol{P}_2;I)}.\\
	&\le C \norm{w}_{X(I)} (\norm{w}_{X(I)}^{k_1-1} + \norm{w}_{X(I)}^{k_2-1}+ \norm{\wt{v}}_{X(I)}^{k_1-1} + \norm{\wt{v}}_{X(I)}^{k_2-1}) 
	\le C \eta,
\end{align*}
which completes the proof.
\end{proof}

\begin{thm}[Local existence of a solution]\label{thm:aeq_lwp1}
There exists  a universal constant $\delta_0>0$ such that if
a guide flow $V(t)$ satisfies $\norm{V}_{X(I)} \le \delta_0$
for some interval $I$ then for any $t_0 \in \R \cap \ol{I}$ there exists
a unique solution $v(t)$ of \eqref{eq:aeq} associated with $t_0$ and $V(t)$ on $I$.
Moreover, $v\in X(I)$.
\end{thm}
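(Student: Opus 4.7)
The plan is to reduce the theorem to a direct application of Proposition~\ref{prop1:2} with the trivial reference solution $\wt{v}\equiv 0$ and reference guide flow $\wt{V}\equiv 0$. First I will check that $\wt{v}\equiv 0$ is indeed a solution of \eqref{eq:aeq} associated with any $t_0$ and with $\wt{V}\equiv 0$ on any interval; this follows at once from the expression \eqref{def:F}, which yields $F(0)=0$ because $|v|^{2}+2\re(v)$ vanishes at $v=0$. With this choice, all hypotheses of Proposition~\ref{prop1:2} hold and $\tnorm{V-\wt{V}}_{X(I)}=\tnorm{V}_{X(I)}$.

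The second step, which is the only point needing a bit of care, is to revisit the proof of Proposition~\ref{prop1:2} and observe that in this trivial reference case the smallness threshold $\eta_1(M)$ becomes a \emph{universal} constant. Indeed, with $\wt{v}\equiv 0$ we have $M=\tnorm{\wt{v}}_{X(I)}=0$, so the subdivision $\{I_j\}_{j=1}^{J}$ required to satisfy $\sup_j\tnorm{\wt{v}}_{X(I_j)}\le m$ can be taken with $J=1$, irrespective of $m>0$ and of $|I|$. Plugging $J=1$ into the explicit formula
\[
\eta_1(M)=\frac{\min\bigl((4C_2)^{1/(k_2-1)},\,1,\,(6C_3)^{-1/(k_1-1)}\bigr)}{2(C_1(C_1+1)^{J-1}-C_1+1)}
\]
collapses the denominator to $2$, so $\eta_1(0)$ depends only on the absolute constants $C_1,C_2,C_3$ coming from Strichartz' estimates and from Lemmas~\ref{lem1:3}--\ref{lem1:4}. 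I therefore set $\delta_0:=\eta_1(0)$.

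Finally, for any interval $I$, any $t_0\in\R\cap\ol{I}$, and any guide flow $V$ with $\tnorm{V}_{X(I)}\le\delta_0$, the hypothesis $\eta:=\tnorm{V-\wt{V}}_{X(I)}=\tnorm{V}_{X(I)}\le\eta_1(0)$ of Proposition~\ref{prop1:2} is met, and the proposition produces a unique solution $v$ of \eqref{eq:aeq} associated with $t_0$ and $V$ on $I$ satisfying $\tnorm{v}_{X(I)}=\tnorm{v-\wt{v}}_{X(I)}\le C\eta\le C\delta_0$; in particular $v\in X(I)$. No new analytic estimate is needed; the only subtle point is the bookkeeping observation that the induction inside Proposition~\ref{prop1:2} trivializes to a single step when $\wt{v}\equiv 0$, which is precisely what makes $\eta_1(0)$ universal in $I$, $t_0$, and $V$.
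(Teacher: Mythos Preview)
Your proof is correct and follows exactly the paper's approach: the paper's proof is the single sentence ``Take $\wt{V}\equiv 0$ and $\wt{v}\equiv 0$ in Proposition~\ref{prop1:2}.'' Your additional observation---that with $\wt{v}\equiv 0$ one may take $J=1$ in the subdivision regardless of $I$, so that $\eta_1(0)$ is a universal constant---is precisely the unstated justification behind why $\delta_0$ does not depend on $I$, $t_0$, or $V$.
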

\begin{proof}
Take $\wt{V}\equiv0$ and $\wt{v} \equiv 0$ in Proposition \ref{prop1:2}.
\end{proof}
\begin{thm}[Uniqueness and unique continuation]\label{thm:aeq_lwp2}
Let $v_1,v_2$ be two solutions of \eqref{eq:aeq} associated with $t_0$ and $V(t)$ on intervals $I_1$ and $I_2$, respectively.
If $v_j \in X(I_j)$ for $j=1,2$ and if $t_0 \in I_1 \cap I_2$ then $v_1=v_2$ on $I_1 \cap I_2$.
In particular, under the same assumption, both solutions can be uniquely extended to a solution on $I_1 \cup I_2$.
\end{thm}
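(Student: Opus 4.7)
The plan is to reduce everything to a local coincidence argument. Once $v_1 = v_2$ on $I_1 \cap I_2$ is established, the continuation assertion follows immediately: the function defined by $v|_{I_1} := v_1$ and $v|_{I_2} := v_2$ is well-defined on $I_1 \cup I_2$, lies in $X_{\mathrm{loc}}(I_1 \cup I_2)$, and satisfies \eqref{eq:aeq} there, because $t_0 \in I_1 \cap I_2$ guarantees that for every $t \in I_1 \cup I_2$ the segment between $t_0$ and $t$ lies in at least one of the $I_j$.

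To prove uniqueness I focus on the forward half $J_+ := (I_1 \cap I_2) \cap [t_0, \infty)$ (the backward case is symmetric) and set
\[
	T^{\ast} := \sup\bigl\{\, T \in J_+ \,:\, v_1 = v_2 \text{ on } [t_0, T]\,\bigr\}.
\]
Since $v_j - V \in C(I_j, \dot{H}^{s_2})$ by the remark following the definition of solution, $w := v_1 - v_2$ is continuous from $I_1 \cap I_2$ into $\dot{H}^{s_2}$, so $w(T^{\ast}) = 0$ whenever $T^{\ast}$ lies in the interior of $I_1 \cap I_2$. Suppose for contradiction $T^{\ast} < \sup J_+$. Since the time exponents in the definitions of $L(P_1)$ and $\dot{W}^{s_2}(P_2)$ are finite for $p > 2$ and $n = 1, 2$ (by direct inspection of the explicit formulas for $P_1$, $P_2$), absolute continuity of $L^q$-norms in the time variable lets me select $\delta > 0$ with $[T^{\ast}, T^{\ast} + \delta] \subset I_1 \cap I_2$ and
\[
	\epsilon := \norm{v_1}_{X([T^{\ast}, T^{\ast}+\delta])} + \norm{v_2}_{X([T^{\ast}, T^{\ast}+\delta])}
\]
as small as I please.

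The next step is to derive a closed integral identity for $w$ on $[T^{\ast}, T^{\ast} + \delta]$: splitting the integral in \eqref{eq:aeq} at $T^{\ast}$, applying the semigroup property of $U(t)$, and using $v_1(T^{\ast}) = v_2(T^{\ast})$, I arrive at
\[
	w(t) = -i\int_{T^{\ast}}^{t} U(t - s)\bigl[F(v_1(s)) - F(v_2(s))\bigr]\, ds.
\]
The Strichartz estimates of Proposition \ref{stri1:1} applied to each component of the $X$-norm, combined with the difference bounds \eqref{non2:1}--\eqref{non2:4} of Lemma \ref{lem1:4} taken with $\sigma = s_2$, then yield
\[
	\norm{w}_{X([T^{\ast}, T^{\ast}+\delta])} \le C\,\Phi(\epsilon)\,\norm{w}_{X([T^{\ast}, T^{\ast}+\delta])},
\]
with $\Phi(\epsilon) \to 0$ as $\epsilon \to 0$. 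Fixing $\epsilon$ so that $C\Phi(\epsilon) < 1/2$ forces $w \equiv 0$ on $[T^{\ast}, T^{\ast} + \delta]$, contradicting the maximality of $T^{\ast}$.

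The one technical point worth singling out is the admissibility of $\sigma = s_2$ in Lemma \ref{lem1:4}, which demands $s_2 < k_1 - 1 = p - 2$. In dimension $n = 2$ this reduces to $(p - 2)^2 > 0$, while in dimension $n = 1$ it reduces to $2p^2 - 7p + 7 > 0$ (whose discriminant is negative); both are trivially satisfied under the standing hypothesis $p > 2$. All other ingredients---the semigroup splitting, the Strichartz inhomogeneous bound, and the continuity of $v_j - V$ into $\dot{H}^{s_2}$---are already in place, so the local contraction is the only substantive mechanism.
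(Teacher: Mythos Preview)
Your argument is correct. The paper's own proof is a one-line invocation of Proposition~\ref{prop1:2} with $\wt V=V$, $\wt v=v_1$, and $I=I_1\cap I_2$: since $\eta=0$, the uniqueness clause there forces $v_2=v_1$ on $I$. You instead unpack the same contraction mechanism directly via a maximal-coincidence-time argument, feeding Lemma~\ref{lem1:4} (with $\sigma=s_2$) into the inhomogeneous Strichartz bound on a short interval. The ingredients are identical to those driving Proposition~\ref{prop1:2}; your route is simply more self-contained and, as a small bonus, establishes uniqueness in all of $X(I_1\cap I_2)$ without having to verify that a given second solution falls into the particular balls $X_j$ used in the inductive construction there.

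One cosmetic remark: the identity
\[
  w(t)=-i\int_{T^\ast}^{t} U(t-s)\bigl[F(v_1(s))-F(v_2(s))\bigr]\,ds
\]
does not actually require the semigroup property or the pointwise equality $v_1(T^\ast)=v_2(T^\ast)$; subtracting the two integral equations and using $F(v_1)=F(v_2)$ on $[t_0,T^\ast]$ already gives it. Your verification that $s_2<k_1-1$ in both dimensions is correct, so Lemma~\ref{lem1:4} applies as you claim.
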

\begin{proof}
We apply $\wt{V}\equiv V$, $\wt{v} =v_1$, $v=v_2$, and $I=I_1\cap I_2$.
Then, we obtain $v_2=v_1$ in $X(I)$ on $I_1\cap I_2$.
Unique continuation property is obvious.
\end{proof}

Let $v(t)$ be a solution of \eqref{eq:aeq} associated with $t_0$ and $V(t)$ on $I$.
We define $I_{\max}=I_{\max}(t_0,V)=(T_{\min}(t_0,V),T_{\max}(t_0,V))$, where
\begin{align*}
	T_{\max} &:= \sup \left\{ T>t_0 \ \middle|\ 
	\begin{aligned}
	&\exists\text{a solution }v
	\text{ associated with }
	t_0
	\text{ and }
	V(t)
	\\
	&	\text{ on }
	[t_0,T]\text{ satisfying }X([t_0,T])<\I.
	\end{aligned}
	\right\}, \\
	T_{\min} &:= \inf \left\{ T<t_0 \ \middle|\ 
	\begin{aligned}
	&\exists\text{a solution }v
	\text{ associated with }
	t_0
	\text{ and }
	V(t)
	\\
	&	\text{ on }
	[t_0,T]\text{ satisfying }X([T,t_0])<\I.
	\end{aligned}
	\right\}.
\end{align*}
By Theorems \ref{thm:aeq_lwp1} and \ref{thm:aeq_lwp2}, for any $t_0$ and $V(t)$ there exists
a unique solution $v(t)$ associated with $t_0$ on $V(t)$ on $I_{\max}(t_0,V)\ni t_0$.
Remark that $v(t)\in X_{\mathrm{loc}}(I_{\max}(t_0,V))$.
We call this solution a \emph{maximal solution}.

To complete our well-posedness type result on \eqref{eq:aeq}, 
we shall observe continuous dependence of guide flow.

\begin{thm}[Continuous dependence on guide flow]\label{thm:aeq_lwp3}
Fix $t_0\in\R$. The mapping from a guide flow $V(t) \in X_{\mathrm{loc}}(\R)$ to
a maximal solution $v(t)\in X_{\mathrm{loc}}(I_{\max}(t_0,V))$ is continuous in the following sense:
For any compact interval $J \subset I_{\max}(t_0,V)$ and a positive number $\e>0$,
there exists a neighborhood $\mathcal{V}=\mathcal{V}(t_0,V,J,\e)\subset X(J)$
of $V(t)$ such that if $\wt{V} \in \mathcal{V}$ then $I_{\max}(t_0, \wt{V}) \supset J$ and
a maximal solution $\wt{v}$ of \eqref{eq:aeq} associated with $t_0$ and $\wt{V}$ satisfies
$\norm{v-\wt{v}}_{X(J)} \le \e$.
\end{thm}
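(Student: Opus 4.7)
The plan is to derive the theorem as a direct corollary of Proposition \ref{prop1:2}, taking the given maximal solution $v$ together with its guide flow $V$ as the reference pair $(\wt{v},\wt{V})$ in that proposition.

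First I would normalize so that $t_0$ lies inside the compact interval under consideration. Let $J' \subset I_{\max}(t_0,V)$ be the smallest compact interval containing $J \cup \{t_0\}$. Because $v \in X_{\mathrm{loc}}(I_{\max}(t_0,V))$, we have $M := \norm{v}_{X(J')} < \I$. Proposition \ref{prop1:2}, applied with $I = J'$ and with the reference pair $(\wt{v},\wt{V})$ of that proposition taken to be our $(v,V)$, then supplies a threshold $\eta_1 = \eta_1(M) > 0$ and a constant $C_0 = C_0(M) > 0$ such that, for any guide flow $\wt{V}' \in X_{\mathrm{loc}}(\R)$ with $\norm{\wt{V}' - V}_{X(J')} \le \eta_1$, there exists a unique solution $\wt{v}'$ of \eqref{eq:aeq} associated with $t_0$ and $\wt{V}'$ on $J'$, and it satisfies
\[
\norm{\wt{v}' - v}_{X(J')} \le C_0 \norm{\wt{V}' - V}_{X(J')}.
\]

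Next I would set $\delta := \min(\eta_1,\ \e/C_0)$ and take
\[
\mathcal{V} := \{\wt{V}' \in X_{\mathrm{loc}}(\R) : \norm{\wt{V}' - V}_{X(J')} < \delta\}
\]
as the desired neighborhood. For each $\wt{V}' \in \mathcal{V}$, the solution $\wt{v}'$ produced above belongs to $X(J')$, so by the definition of the maximal interval we have $J \subset J' \subset I_{\max}(t_0,\wt{V}')$. The uniqueness statement Theorem \ref{thm:aeq_lwp2} then guarantees that the maximal solution associated with $\wt{V}'$ coincides with $\wt{v}'$ on $J'$, and the chain
\[
\norm{v - \wt{v}'}_{X(J)} \le \norm{v - \wt{v}'}_{X(J')} \le C_0 \delta \le \e
\]
closes the argument.

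No real obstacle arises, since all the analytic work---iteration on a subdivision $\{I_j\}$ adapted to the finiteness of $\norm{v}_{X(J')}$, contraction in the $L(P_1)$-metric, and the Strichartz bookkeeping---has already been carried out inside Proposition \ref{prop1:2}. The only conceptual point worth flagging is that the neighborhood must be measured on an interval containing $t_0$, which forces one to enlarge $J$ to $J'$ at the outset; this is automatic when $t_0 \in J$, and causes no loss otherwise.
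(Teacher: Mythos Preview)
Your argument is correct and matches the paper's approach exactly: the paper's entire proof is the single sentence ``This is merely a qualitative version of Proposition \ref{prop1:2},'' and you have supplied precisely the details that sentence suppresses, including the enlargement $J \to J'$ to place $t_0$ in the interval and the appeal to Theorem \ref{thm:aeq_lwp2} to identify the Proposition-\ref{prop1:2} solution with the maximal one. One small technical point worth tightening: since $I_{\max}$ is by definition the \emph{open} interval $(T_{\min},T_{\max})$, the existence of $\wt{v}'\in X(J')$ yields only $T_{\max}(t_0,\wt{V}')\ge \sup J'$, not the strict inequality needed for the compact $J'$ to sit inside $I_{\max}(t_0,\wt{V}')$; the cleanest fix is to run your argument on a slightly larger compact interval $J''\subset I_{\max}(t_0,V)$ from the start, so that $J\subset J'\subset \operatorname{int} J'' \subset I_{\max}(t_0,\wt{V}')$.
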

\begin{proof}
This is merely a qualitative version of Proposition \ref{prop1:2}.
\end{proof}

\begin{rem}
If we take both $\wt{V}$ and $V$ as in the second example of Remark \ref{rmk:guideflow},
then Proposition \ref{prop1:2} reads as a stability result, which is sometimes called a long-time stability.
\end{rem}

We conclude this section with a regularity property.
\begin{thm}[inheritance of regularity]\label{thm:regularity}
Let $t_0\in\R$ and let $V(t)$ be a guide flow.
Let $v(t)$ be a unique maximal solution to \eqref{eq:aeq} associated with $t_0$ and $V$ on $I_{\max}$.
If $V(t) \in C(I_{\max}, \dot{H}^\sigma) \cap \dot{W}^{\s}_{\mathrm{loc}}({P}_2;I_{\max})$ for some $0\le \sigma < k_1$
then $v(t)\in C(I_{\max}, \dot{H}^\sigma) \cap \dot{W}^{\s}_{\mathrm{loc}}({P}_2;I_{\max})$.
Further, the equation \eqref{eq:aeq} holds in $\dot{H}^{\s}$ sense for all $t\in I_{\max}$.
\end{thm}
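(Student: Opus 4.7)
The plan is to upgrade the known maximal $X_{\mathrm{loc}}$-solution $v$ to the higher regularity class on every compact sub-interval of $I_{\max}$ by re-running the Picard scheme inside a class that also tracks the $\dot{W}^\sigma(P_2)$-norm, taking advantage of the fact that $\|v\|_{X}$ can be made as small as one wishes on each piece of a sufficiently fine partition. As a preliminary reduction, once $v\in \dot{W}^\sigma_{\mathrm{loc}}(P_2;I_{\max})$ has been established, the nonlinear estimates \eqref{non1:2}--\eqref{non1:3} of Lemma \ref{lem1:3} give $F(v)\in \dot{W}^\sigma(\ol{P}_2)$ on every compact sub-interval; Strichartz applied to \eqref{eq:aeq} then puts the Duhamel term into $C(I_{\max},\dot{H}^\sigma)$, and together with the hypothesis $V\in C(I_{\max},\dot{H}^\sigma)$ this yields $v\in C(I_{\max},\dot{H}^\sigma)$ and also that \eqref{eq:aeq} holds in the $\dot{H}^\sigma$-sense. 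So only the $\dot{W}^\sigma(P_2)$-regularity needs to be proved.

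Fix a compact $J\subset I_{\max}$ containing $t_0$; by symmetry I treat only the forward half $J^{+}=J\cap[t_0,\infty)$. Choose $\eta>0$ so small that $C_\ast(\eta^{k_1-1}+\eta^{k_2-1})\le 1/2$, where $C_\ast$ is the constant arising from Strichartz together with Lemmas \ref{lem1:3}--\ref{lem1:4}, and partition $J^{+}=\bigcup_{k=1}^{N}[t_{k-1},t_k]$ with $\|v\|_{X([t_{k-1},t_k])}\le \eta$. On $J_k:=[t_{k-1},t_k]$, the equation \eqref{eq:aeq} can be restarted as
\[
v(t)=V_k(t)-i\int_{t_{k-1}}^{t}U(t-s)F(v(s))\,ds,\qquad V_k(t):=V(t)-iU(t-t_{k-1})g_{k-1},
\]
where $g_{k-1}:=\int_{t_0}^{t_{k-1}}U(t_{k-1}-s)F(v(s))\,ds$ and $g_0:=0$. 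The induction hypothesis is $v\in \dot{W}^\sigma(P_2;J_j)$ for $j<k$; granted this, Lemma \ref{lem1:3} yields $F(v)\in \dot{W}^\sigma(\ol{P}_2;[t_0,t_{k-1}])$, Strichartz gives $g_{k-1}\in\dot{H}^\sigma$, and hence $V_k\in \dot{W}^\sigma(P_2;J_k)$ with an explicit bound in terms of the previous pieces. Moreover, applying the integral equation for $v$ on $J_k$ itself in the $X$-norm, one has $\|V_k\|_{X(J_k)}\le \|v\|_{X(J_k)}+C(\eta^{k_1}+\eta^{k_2})\le 2\eta$ for $\eta$ chosen small.

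For the inductive step, I re-run the Picard iteration $u_{n+1}(t)=V_k(t)-i\int_{t_{k-1}}^{t}U(t-s)F(u_n(s))\,ds$ with $u_0=V_k$ inside
\[
Y_k:=\left\{u\in X(J_k)\cap \dot{W}^\sigma(P_2;J_k):\ \|u\|_{X(J_k)}\le 4\eta,\ \|u\|_{\dot{W}^\sigma(P_2;J_k)}\le 2\|V_k\|_{\dot{W}^\sigma(P_2;J_k)}\right\}.
\]
Strichartz together with \eqref{non1:2}--\eqref{non1:3} (using the bounds on $\|V_k\|_{X(J_k)}$ and $\|V_k\|_{\dot{W}^\sigma(P_2;J_k)}$ established above) shows that $u_n\in Y_k$ for every $n$, while contraction in the weaker distance $\|u_1-u_2\|_{L(P_1;J_k)}$ is immediate from \eqref{non2:1}--\eqref{non2:2} exactly as in the proof of Proposition \ref{prop1:2}. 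Hence $u_n\to u^{\ast}$ in $L(P_1;J_k)$ with $u^{\ast}\in X(J_k)\cap \dot{W}^\sigma(P_2;J_k)$ by Fatou's property of $Y_k$; $u^{\ast}$ solves \eqref{eq:aeq} on $J_k$ associated with initial time $t_{k-1}$ and guide flow $V_k$, so by the uniqueness part of Theorem \ref{thm:aeq_lwp2} it coincides with $v|_{J_k}$, which completes the induction. The only subtlety I expect is organizational: propagating the accumulated Duhamel correction $g_{k-1}\in\dot{H}^\sigma$ consistently through the $N$ restarts and verifying that the quantitative $\dot{H}^\sigma$ norm does not blow up. This is under control because Lemma \ref{lem1:3} bounds $F$ in $\dot{W}^\sigma(\ol{P}_2)$ \emph{linearly} in $\|u\|_{\dot{W}^\sigma(P_2)}$, with coefficients that are small on each piece, and the hypothesis $0\le\sigma<k_1$ is precisely the range in which that lemma applies.
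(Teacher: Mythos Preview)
Your proof is correct and follows the same overall strategy as the paper: reduce to showing $v\in\dot{W}^\sigma_{\mathrm{loc}}(P_2;I_{\max})$, partition a compact sub-interval into pieces on which $\|v\|_{X}$ is small, and induct along the pieces using that the bounds \eqref{non1:2}--\eqref{non1:3} are \emph{linear} in $\|u\|_{\dot{W}^\sigma(P_2)}$ with coefficients controlled by $\|u\|_X^{k_i-1}$. The only difference is in how the inductive step on each piece is closed. The paper works directly with $v$ and derives the a~priori inequality
\[
\|v\|_{\dot{W}^\sigma(P_2;\cup_{k\le j}I_k)} \le C\|V\|_{\dot{W}^\sigma(P_2;I')} + \|F(v)\|_{\dot{W}^\sigma(\ol{P}_2;\cup_{k<j}I_k)} + \tfrac12\|v\|_{\dot{W}^\sigma(P_2;I_j)},
\]
then absorbs the last term into the left-hand side; this is shorter, but the absorption tacitly presumes $\|v\|_{\dot{W}^\sigma(P_2;I_j)}<\infty$, which strictly speaking requires a continuity or approximation argument behind the scenes. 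You instead re-run the Picard iteration inside a ball of $X(J_k)\cap\dot{W}^\sigma(P_2;J_k)$ and invoke the uniqueness of Theorem~\ref{thm:aeq_lwp2} to identify the fixed point with $v|_{J_k}$, which sidesteps that issue entirely. Both routes are standard; yours is slightly longer but fully self-contained, while the paper's is terser and relies on the reader supplying the usual justification.
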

\begin{proof}
Let $I'$ be a compact interval such that $I' \subset I_{\max}$.
We claim that $v \in \dot{W}^{\sigma}(P_2;I')$.
Remark that the claim shows the result because we deduce from Strichartz' estimate,
\eqref{non1:2}, and \eqref{non1:3} that
\begin{align*}
&\norm{\int_{t_0}^t U(t-s) F(v(s)) ds }_{L^\I(I',\dot{H}^\sigma) \cap \dot{W}^\sigma (P_2;I')} \\
	&{}\le C \norm{F(v)}_{\dot{W}^\s (\ol{P}_2;I')}\\
	&{}\le C \norm{v}_{\dot{W}^{\s}(P_2;I')}(\norm{v}_{X(I')}^{k_1-1} + \norm{v}_{X(I')}^{k_2-1}) <\I.
\end{align*}
Together with $V(t) \in C(I_{\max}, \dot{H}^\sigma) \cap \dot{W}^{\s}_{\mathrm{loc}}({P}_2; I_{\max})$,
it proves the desired result.

We show the claim.
As in the proof of Proposition \ref{prop1:2}, it suffices to show under the assumption $\inf I' = t_0$.
Fix $m>0$.
Divide $I'$ into $J(m)$ intervals $I_j:=[t_{j-1},t_j]$ by choosing suitable
$t_0 < t_1 < t_2 < \dots < t_{J} = \sup I'$ so that $\max_{j} \norm{v}_{X(I_j)} \le m$.
Let us show $v \in \dot{W}^{\sigma}(P_2; \cup_{k=1}^j I_k)$ for all $j$ by induction on $j$.
To this end, we take $j\ge2$ and suppose that $v \in \dot{W}^{\sigma}(P_2;\cup_{k=1}^{j-1} I_k)$.
Then, $F(v) \in \dot{W}^\s (\ol{P}_2;\cup_{k=1}^{j-1} I_k)$ in light of \eqref{non1:2} and \eqref{non1:3}.
By Strichartz' estimate and \eqref{non1:2} and \eqref{non1:3},
\begin{align*}
&\norm{\int_{t_0}^t U(t-s) F(v(s)) ds }_{\dot{W}^\sigma (P_2;\cup_{k=1}^j I_k)} \\
	&{}\le C \norm{F(v)}_{\dot{W}^\s (\ol{P}_2;\cup_{k=1}^j I_k)}\\
	&{}\le C\norm{F(v)}_{\dot{W}^\s (\ol{P}_2;\cup_{k=1}^{j-1} I_k)} + C \norm{v}_{\dot{W}^{\s}(P_2;I_j)}(\norm{v}_{X(I_j)}^{k_1-1} + \norm{v}_{X(I_j)}^{k_2-1}) <\I.
\end{align*}
Fix $m$ small to obtain
\[
	\norm{v}_{\dot{W}^\sigma (P_2;\cup_{k=1}^j)}
	\le C\norm{V}_{\dot{W}^\sigma (P_2;I')} +\norm{F(v)}_{\dot{W}^\s (\ol{P}_2;\cup_{k=1}^{j-1} I_k)}+ \frac12 \norm{v}_{\dot{W}^{\s}(P_2;I_j)},
\]
showing $\norm{v}_{\dot{W}^\sigma (P_2;\cup_{k=1}^j)} < \I$. The base case $j=1$ can be proven in a similar way.
Thus, the claim is shown by induction.
\end{proof}

\section{Proof of main results}\label{sec:5}

We are now ready to show our main results.
\begin{proof}[Proof of Theorem \ref{thm1:1}] 
Let us first show that a guide flow $V(t)=U(t)v_0$ belongs to $X_{\mathrm{loc}}(\R)$.
When $n=1$, we see from the H\"older inequality, the Sobolev embedding, and Strichartz' estimates that
\begin{align*}
	\norm{U(t)v_0}_{X(I)} &= \Lebn{U(t)v_0}{P_1; I} + \Wsn{U(t)v_0}{P_2; I} \\
	&\le |I|^{\frac1{q_1}} \norm{U(t)|\nabla|^{s_1} v_0}_{L^{\I}_t(I;L^{2})}\\ 
	&\quad + |I|^{\frac1{2p(p-1)}}\norm{U(t)|\nabla|^{s_1} v_0}_{L^{4(p-1)}_t(I;L^{\frac{2(p-1)}{p-2}})} \\ 
	&\le C(|I|^{\frac1{q_1}}+|I|^{\frac1{2p(p-1)}} ) \norm{v_0}_{\dot{H}^{s_1}}
\end{align*}
for any compact interval $I \subset \R$.
In the case $n=2$, a similar argument shows
\begin{align*}
	\norm{U(t)v_0}_{X(I)} &= \Lebn{U(t)v_0}{P_1; I} + \Wsn{U(t)v_0}{P_2; I} \\
	&\le |I|^{\frac1{q_1}} \norm{U(t)|\nabla|^{s_1} v_0}_{L^{\I}_t(I;L^{2})} + \Wsn{U(t)v_0}{P_2; I}.
\end{align*}
In both cases, we obtain $U(t)v_0 \in X_{\mathrm{loc}}(\R) \cap C(\R,\dot{H}^{s_0} \cap \dot{H}^{s_1})$.

We apply Theorem \ref{thm:aeq_lwp1} with $t_0=0$ and $V(t)=U(t)v_0$.
Then, together with Theorem \ref{thm:aeq_lwp2},
there exists a unique maximal solution $v \in X_{\mathrm{loc}}(I_{\max})$ of \eqref{iggp}.
Furthermore, using Theorem \ref{thm:regularity} with $\s = s_1$ if $d=1$, $\s=s_1$ or $\s=s_2$ if $d=2$,
we obtain $v \in  X_{\mathrm{loc}}(I_{\max}) \cap C(I_{\max}, \dot{H}^{s_0} \cap \dot{H}^{s_1})$. 

Next, we prove the continuous dependence on the initial data.
Mimicking the above estimates, for any compact $J \subset I_{\max}$ we have
\[
	\norm{U(t)v_0 - U(t)\wt{v}_0}_{X(J)} \le C(|J|) \norm{v_0-\wt{v}_0}_{\dot{H}^{s_0}\cap \dot{H}^{s_1}}.
\]
When $n=2$ then
plugging this estimate to Proposition \ref{prop1:2}, 
we see that for any $\e>0$ 
there exists a neighborhood $V \subset \dot{H}^{s_0} \cap \dot{H}^{s_1}$ of $v_0$ such that for any $\wt{v}_{0} \in V$
a corresponding solution $\wt{v} \in X(J) \cap C(J, \dot{H}^{s_0})$ to \eqref{ggp} exists on $J$ and satisfies
$\norm{v-\wt{v}}_{X(J)} + \norm{v-\wt{v}}_{L^\I(J,\dot{H}^{s_2})}\le \e$.

The proof for the remaining part is similar.
We argue by an induction argument as in the proof of Proposition \ref{prop1:2}
with a persistence-of-regularity type argument in Theorem \ref{thm:regularity}.
We omit the details.
\end{proof}

\begin{proof}[Proof of Theorem \ref{thm1:2}]
It is an immediate consequence of Theorem \ref{thm:regularity} and property of $U(t)$.
\end{proof}

Here, we give a standard criterion for blowup of the solution to \eqref{ggp}. 
It will be employed in the proof of Theorem \ref{thm1:5}. 

\begin{pro}[Blowup criterion]\label{pro5:1}
Assume $n=1$, $2$ and $1+k_{\mathrm{St}} < p  <1+k_{\mathrm{m}}$. Let $v_{0} \in \dot{H}^{s_0} \cap \dot{H}^{s_1}$ and Let $v(t) \in X_{\mathrm{loc}}(I_{\max}) \cap C(I_{\max}, \dot{H}^{s_0} \cap \dot{H}^{s_1})$ be a corresponding solution given in Theorem \ref{thm1:1}.
If $T_{\max} < \infty$, then 
\[
	\norm{v}_{X([0, T])} \to \I
\]
as $T \uparrow T_{\max}$. A similar assertion holds for backward time direction.
\end{pro}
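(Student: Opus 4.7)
The plan is to argue by contradiction. Assume $T_{\max} < \infty$ but $\norm{v}_{X([0, T])}$ stays bounded along some sequence $T \uparrow T_{\max}$, say by $M$. Since the $X$-norm is monotone in the interval, this forces $\norm{v}_{X([0, T_{\max}))} \le M$. I will produce an extension of $v$ past $T_{\max}$, contradicting the definition of $T_{\max}$.

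First, by absolute continuity of the Lebesgue integral applied to each summand in the $X$-norm, for any $\eta > 0$ there is $t^* \in (0, T_{\max})$ with $\norm{v}_{X([t^*, T_{\max}))} \le \eta$. Consider the guide flow $V(t) := U(t-t^*) v(t^*)$, which is defined for every $t \in \R$. On $[t^*, T_{\max})$ the integral equation for $v$ gives
\[
V(t) = v(t) + i \int_{t^*}^{t} U(t-s) F(v(s))\, ds.
\]
Applying Strichartz' estimates (Proposition \ref{stri1:1} for the non-admissible pair $(P_1, \ol{P}_1)$ and the admissible estimate for $(P_2, \ol{P}_2)$), together with Lemma \ref{lem1:3}, yields
\[
\norm{V}_{X([t^*, T_{\max}))} \le \norm{v}_{X([t^*, T_{\max}))} + C\bigl(\norm{v}_{X([t^*, T_{\max}))}^{k_1} + \norm{v}_{X([t^*, T_{\max}))}^{k_2}\bigr) \le C\eta.
\]

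Next I estimate the tail $\norm{V}_{X([T_{\max}, T_{\max}+\epsilon])}$ for small $\epsilon>0$. For $n=1$, Hölder in time and Strichartz applied to $V$ (mimicking the first computation in the proof of Theorem \ref{thm1:1}) bound it by $C(\epsilon^{1/q_1} + \epsilon^{1/(2p(p-1))}) \norm{v(t^*)}_{\dot{H}^{s_1}}$, which vanishes as $\epsilon \to 0$. For $n=2$, the $L(P_1)$ component is handled the same way, while for $\dot{W}^{s_2}(P_2)$ I would invoke absolute continuity: since $v(t^*)\in \dot{H}^{s_2}$, the global Strichartz bound gives $\norm{V}_{\dot{W}^{s_2}(P_2;\R)}<\infty$, so the restriction to an interval of vanishing length tends to zero. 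Taking $\eta$ small and then $\epsilon$ sufficiently small, we obtain $\norm{V}_{X([t^*, T_{\max}+\epsilon])} \le \delta_0$, where $\delta_0$ is the universal constant of Theorem \ref{thm:aeq_lwp1}.

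Theorem \ref{thm:aeq_lwp1} then produces a solution $w$ of \eqref{eq:aeq} associated with initial time $t^*$ and guide flow $V$ on $[t^*, T_{\max}+\epsilon]$, with $w \in X([t^*, T_{\max}+\epsilon])$. Since $v$ is another solution associated with the same $t^*$ and $V$ on $[t^*, T_{\max})$, Theorem \ref{thm:aeq_lwp2} forces $w \equiv v$ on $[t^*, T_{\max})$, so $w$ extends $v$ strictly beyond $T_{\max}$, contradicting the maximality of $T_{\max}$. The backward direction is identical after time reversal. The main technical subtlety is controlling the $\dot{W}^{s_2}(P_2)$-tail in dimension $n=2$, where no elementary Hölder-in-time estimate is available and one must rely on absolute continuity of the global Strichartz norm evaluated at the fixed datum $v(t^*)$.
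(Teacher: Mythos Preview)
Your argument is correct and follows essentially the same route as the paper: assume the $X$-norm stays bounded, pick a late time $t^*$ where the guide flow $U(t-t^*)v(t^*)$ has small $X$-norm on $[t^*,T_{\max})$ via the integral equation and the nonlinear estimates, then use $U(t-t^*)v(t^*)\in X_{\mathrm{loc}}(\R)$ to push the smallness slightly past $T_{\max}$ and invoke Theorem~\ref{thm:aeq_lwp1}. The paper compresses your tail analysis into the single observation \eqref{assb1:2} rather than splitting $n=1$ and $n=2$, and it does not spell out the uniqueness step, but the structure is the same.
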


\begin{proof}[Proof of Proposition \ref{pro5:1}]
Assume that $T_{\max}<\I$ and
\[
	\lim_{T \uparrow T_{\max}}\norm{v}_{X([0,T])} < \I
\]
for contradiction.
Let us show that, under the assumption, we can extend the solution to \eqref{ggp} beyond $T_{\max}$.
By Theorem \ref{thm1:1}, we see that
there exists $t_{1} \in I_{\max}$ such that 
\begin{align}
\begin{aligned}
	&\norm{U(t-t_1)v(t_1)}_{X([t_1, T_{\max}))} \\
	\le{}& \norm{v}_{X([t_1, T_{\max}))} \\
	&+C\(\norm{v}_{X([t_1, T_{\max}))} ^{k_1-1} + \norm{v}_{X([t_1, T_{\max}))} ^{k_2-1} \)\norm{v}_{X([t_1, T_{\max}))} \\
	\le{}& \frac{\d_0}{2},
\end{aligned}
	\label{assb1:1}
\end{align}
where $\d_0$ is the constant given in Theorem \ref{thm:aeq_lwp1}. On the other hand,
as in the proof of Theorem \ref{thm1:1},
 it follows from $v(t_1) \in \dot{H}^{s_0} \cap \dot{H}^{s_1}$, Strichartz estimate and Sobolev embedding that
\begin{align}
	U(t-t_1)v(t_1) \in C(\R, \dot{H}^{s_0} \cap \dot{H}^{s_1}) \cap X_{loc}(\R). \label{assb1:2}
\end{align}
Combining \eqref{assb1:1} with \eqref{assb1:2}, there exists $\e >0$ such that 
\begin{align*}
	\norm{U(t-t_1)v(t_1)}_{X([t_1, T_{\max} + \e])} \le \d_0.
\end{align*}
By Theorem \ref{thm:aeq_lwp1}, we can construct a solution to \eqref{ggp} in the interval $[t_1, T_{\max}+\e]$. This contradicts to the definition of $T_{\max}$, which yields the desired assertion.
\end{proof}

\begin{proof}[Proof of Theorem \ref{thm1:5}]
We prove the first assertion. Suppose $\norm{v}_{X([0,T_{\max}))}<\I$.
It is immediate to see that $T_{\max}=\I$.
Indeed, we see from Proposition \ref{pro5:1} that if $T_{\max}<\I$ then
$\norm{v}_{X((0, T_{\max}))}=\I$. 
Further, by a persistence of regularity type argument, we see that $\norm{v}_{X([0,\I))} <\I$ implies
$v(t) \in C([0,\I), \dot{H}^{s_0} \cap \dot{H}^{s_1}) \cap \dot{W}^{s_0} (P_2;[0,\I)) \cap \dot{W}^{s_0} (P_2;[0,\I))$. 
Let us prove $v(t)$ scatters in $\dot{H}^{s_0} \cap \dot{H}^{s_1}$ forward in time.
Let 
$0<t_1 < t_2$. Set $s=s_0$ or $s=s_1$.
Since $U(t)$ is unitary on $\dot{H}^{s_0} \cap \dot{H}^{s_1}$, by Lemma \ref{lem1:4}, 
we have
\begin{align*}
	&{}\norm{U(-t_2)v(t_2)-U(-t_1)v(t_1)}_{\dot{H}^{s}} \\
  	\le&{}\norm{v(\cdot)-U(t_2-t_1)v(t_1)}_{L^\I((t_1,\I);\dot{H}^{s})} \\
  	=&{} \norm{\int_{t_1}^t U(t-s) F(v(s)) ds }_{L^\I((t_1,\I);\dot{H}^{s})} \\
  	\le &{}  C \norm{F(v)}_{\dot{W}^{s}(\ol{P}_2;(t_1,\I))} \\
 	\le &{} C\norm{v}_{\dot{W}^{s} (P_2;(t_1,\I))}(\norm{v}_{X((t_1,\I))}^{k_1-1} + C\norm{v}_{X((t_1,\I))}^{k_2-1})
 	\to 0
\end{align*}
as $t_1 \to\I$. This implies that $U(-t)v(t)$ converges in $\dot{H}^{s_0} \cap \dot{H}^{s_1}$ as $t\to\I$.

The second assertion is shown by an analogous argument.
By persistence of regularity, the assumption $v(t_0) \in \dot{H}^\s$ yields $v\in C([0,\I),\dot{H}^\s) \cap 
\dot{W}^{\s}(P_2; [0,\I))$. Then, arguing as above,
\begin{multline*}
\norm{U(-t_2)v(t_2)-U(-t_1)v(t_1)}_{\dot{H}^{\s}} \\
\le C \norm{v}_{\dot{W}^{\s} (P_2;(t_1,\I))}(\norm{v}_{X((t_1,\I))}^{k_1-1} + C\norm{v}_{X((t_1,\I))}^{k_2-1})
\end{multline*}
for any $0<t_1<t_2$.
\end{proof}

\begin{proof}[Proof of Theorem \ref{thm1:3}]
From Lemma \ref{stri1:1}, it holds that
\begin{align*}
\norm{U(t)v_0}_{X(\R)} &= \Lebn{U(t)v_0}{P_1} + \Wsn{U(t)v_0}{P_2} \\
&\le C(\norm{v_0}_{L^{\frac{n(p-2)}{2}}} + \norm{v_0}_{\dot{H}^{s_2}}) 
\le C\d.
\end{align*}
If we choose $\d$ such that $C\d < \d_0$, where $\delta_0$ is the 
constant given in Theorem \ref{thm:aeq_lwp1}, then we deduce from Theorem \ref{thm:aeq_lwp1} that
a corresponding solution $v$ is global. As in the proof of Theorem \ref{thm1:5}, we see that the solution scatters for both time direction in $\dot{H}^{s_2}$.
\end{proof}
\begin{proof}[Proof of Theorem \ref{thm1:4}]
By Strichartz estimate, $\norm{U(t) v_0}_{\dot{W}^{s_2}(P_2)} \le C \norm{v_0}_{\dot{H}^{s_2}}$.
On the other hand, Strichartz estimate in weighted space (see e.g. \cite{NO,Ma2}) gives us
\[
	\norm{U(t) v_0}_{L(P_1)}
	\le C \norm{|x|^{\frac2{p-2}-\frac{n}2}v_0}_{L^2}.
\]
Combining these two estimates and the assumption,
we have $\norm{U(t)v_0}_{X(\R)} \le C \delta$.
The rest is the same as in Theorem \ref{thm1:3}.
\end{proof}


\begin{bibdiv}
\begin{biblist}
\bib{MR1669387}{article}{
    author = {Bethuel, Fabrice},
    author = {Saut, Jean-Claude},
     title = {Travelling waves for the {G}ross-{P}itaevskii equation. {I}},
     date = {1999},
     issn = {0246-0211},
   journal = {Ann. Inst. H. Poincar\'e Phys. Th\'eor.},
   volume = {70},
   number = {2},
  pages = {147\ndash 238},
  url = {http://www.numdam.org/item?id=AIHPA_1999__70_2_147_0},
  review = {\MR{1669387}},      
}

\bib{MR1124294}{article}{
    author = {Christ, F.~M.},
    author = {Weinstein, M.~I.},
     title = {Dispersion of small amplitude solutions of the generalized {K}orteweg-de {V}ries equation},
     date = {1991},
     issn = {0022-1236},
   journal = {J. Funct. Anal.},
   volume = {100},
   number = {1},
  pages = {87\ndash 109},
  url = {http://dx.doi.org/10.1016/0022-1236(91)90103-C},
  review = {\MR{1124294}},      
}

\bib{MR1500278}{proceedings}{
    editor  = {Farina, Alberto},
    editor = {Saut, Jean-Claude},
    title  = {Stationary and time dependent {G}ross-{P}itaevskii equations},
     date = {2008},
     isbn  = {978-0-8218-4357-4},
  publisher = {American Mathematical Society, Providence, RI},
   volume = {473},
   series  = {Contemporary Mathematics},
  pages = {viii+180},
  url = {http://dx.doi.org/10.1090/conm/473},
  review = {\MR{1500278}},      
}

\bib{Fo}{article}{
    author = {Foschi, Damiano},
     title = {Inhomogeneous {S}trichartz estimates},
     date = {2005},
     issn = {0219-8916},
   journal = {J. Hyperbolic Differ. Equ.},
   volume = {2},
   number = {1},
  pages = {1\ndash 24},
  url = {http://dx.doi.org/10.1142/S0219891605000361},
  review = {\MR{2134950 (2006a:35043)}},      
}

\bib{PhysRevLett.69.1644}{article}{
    author = {Frisch, T.},
    author = {Pomeau, Y.},
    author = {Rica, S.},
     title = {Transition to dissipation in a model of superflow},
     date = {1992},
   journal = {Phys. Rev. Lett.},
   volume = {69},
   number = {11},
  pages = {1644\ndash 1647},
  url = {http://link.aps.org/doi/10.1103/PhysRevLett.69.1644},     
}

\bib{MR2424376}{article}{
    author = {Gallo, Cl{\'e}ment},
     title = {The {C}auchy problem for defocusing nonlinear {S}chr\"odinger equations with non-vanishing initial data at infinity},
     date = {2008},
     issn = {0360-5302},
   journal = {Comm. Partial Differential Equations},
   volume = {33},
   number = {4-6},
  pages = {729\ndash 771},
  url = {http://dx.doi.org/10.1080/03605300802031614},
  review = {\MR{2424376}},      
}

\bib{Ge}{article}{
    author = {G{\'e}rard, P.},
     title = {The {C}auchy problem for the {G}ross-{P}itaevskii equation},
     date = {2006},
     issn = {0294-1449},
   journal = {Ann. Inst. H. Poincar\'e Anal. Non Lin\'eaire},
   volume = {23},
   number = {5},
  pages = {765\ndash 779},
  url = {http://dx.doi.org/10.1016/j.anihpc.2005.09.004},
  review = {\MR{2259616}},      
}

\bib{Ge2}{incollection}{
    author = {G{\'e}rard, Patrick},
     title = {The {G}ross-{P}itaevskii equation in the energy space},
     date = {2008},
     booktitle = {Stationary and time dependent {G}ross-{P}itaevskii equations},
     issn = {},
   journal = {},
   volume = {473},
   number = {},
  pages = {129\ndash 148},
  url = {http://dx.doi.org/10.1090/conm/473/09226},
  review = {\MR{2522016}},      
}

\bib{GOV}{article}{
    author = {Ginibre, J},
    author = {Ozawa, T.},
    author = {Velo, G.},
     title = {On the existence of the wave operators for a class of nonlinear {S}chr\"odinger equations},
     date = {1994},
     issn = {0246-0211},
   journal = {Ann. Inst. H. Poincar\'e Phys. Th\'eor.},
   volume = {60},
   number = {2},
  pages = {211\ndash 239},
  url = {http://www.numdam.org/item?id=AIHPA_1994__60_2_211_0},
  review = {\MR{1270296}},      
}

\bib{10.1063/1.1703944}{article}{
    author = {Gross, Eugene~P.},
     title = {Hydrodynamics of a Superfluid Condensate},
     date = {1963},
   journal = {Journal of Mathematical Physics},
   volume = {4},
   number = {2},
  pages = {195\ndash 207},
  url = {http://scitation.aip.org/content/aip/journal/jmp/4/2/10.1063/1.1703944},
}

\bib{MR2231117}{article}{
    author = {Gustafson, Stephen},
    author = {Nakanishi, Kenji},
    author = {Tsai, Tai-Peng},
     title = {Scattering for the {G}ross-{P}itaevskii equation},
     date = {2006},
     issn = {1073-2780},
   journal = {Math. Res. Lett.},
   volume = {13},
   number = {2-3},
  pages = {273\ndash 285},
  url = {http://dx.doi.org/10.4310/MRL.2006.v13.n2.a8},
  review = {\MR{2231117}},      
}

\bib{MR2360438}{article}{
    author = {Gustafson, Stephen},
    author = {Nakanishi, Kenji},
    author = {Tsai, Tai-Peng},
     title = {Global dispersive solutions for the {G}ross-{P}itaevskii equation in two and three dimensions},
     date = {2007},
     issn = {1424-0637},
   journal = {Ann. Henri Poincar\'e},
   volume = {8},
   number = {7},
  pages = {1303\ndash 1331},
  url = {http://dx.doi.org/10.1007/s00023-007-0336-6},
  review = {\MR{2360438}},      
}

\bib{MR2559713}{article}{
    author = {Gustafson, Stephen},
    author = {Nakanishi, Kenji},
    author = {Tsai, Tai-Peng},
     title = {Scattering theory for the {G}ross-{P}itaevskii equation in three dimensions},
     date = {2009},
     issn = {0219-1997},
   journal = {Commun. Contemp. Math.},
   volume = {11},
   number = {4},
  pages = {657\ndash 707},
  url = {http://dx.doi.org/10.1142/S0219199709003491},
  review = {\MR{2559713}},      
}

\bib{MR1275405}{incollection}{
 author               = {Kato, Tosio},
 booktitle            = {Spectral and scattering theory and applications},
 pages                = {223--238},
 publisher            = {Math. Soc. Japan, Tokyo},
 series               = {Adv. Stud. Pure Math.},
 title                = {An {$L\sp {q,r}$}-theory for nonlinear {S}chr\"odinger equations},
 volume               = {23},
 date                 = {1994},
 review = {\MR{1275405}},  
 }

\bib{KPV}{article}{
    author = {Kenig, Carlos E.},
    author = {Ponce, Gustavo},
    author = {Vega, Luis},
     title = {Well-posedness and scattering results for the generalized {K}orteweg-de {V}ries equation via the contraction principle},
     date = {1993},
     issn = {0010-3640},
   journal = {Comm. Pure Appl. Math.},
   volume = {46},
   number = {4},
  pages = {527\ndash 620},
  url = {http://dx.doi.org/10.1002/cpa.3160460405},
  review = {\MR{1211741}},      
}

\bib{KMMV}{article}{
    author = {Killip, Rowan},
    author = {Masaki, Satoshi},
    author = {Jason, Murphy},
    author = {Monica, Visan},
     title = {Large data mass-subcritical {N}{L}{S}: critical weighed bound imply scattering},
     date = {2016},
   journal = {preprint},
   eprint  = {arXiv:1606.01512},     
}

\bib{MR3039823}{article}{
    author = {Killip, Rowan},
    author = {Oh, Tadahiro},
    author = {Pocovnicu, Oana},
    author = {Vi{\c{s}}an, Monica},
     title = {Global well-posedness of the {G}ross-{P}itaevskii and cubic-quintic nonlinear {S}chr\"odinger equations with non-vanishing boundary conditions},
     date = {2012},
     issn = {1073-2780},
   journal = {Math. Res. Lett.},
   volume = {19},
   number = {5},
  pages = {969\ndash 986},
  url = {http://dx.doi.org/10.4310/MRL.2012.v19.n5.a1},
  review = {\MR{3039823}},      
}

\bib{Ko}{article}{
    author = {Koh, Youngwoo},
     title = {Improved inhomogeneous {S}trichartz estimates for the {S}chr\"odinger equation},
     date = {2011},
     issn = {0022-247X},
   journal = {J. Math. Anal. Appl.},
   volume = {373},
   number = {1},
  pages = {147\ndash 160},
  url = {http://dx.doi.org/10.1016/j.jmaa.2010.06.019},
  review = {\MR{2684466 (2011j:35224)}},      
}

\bib{Ma1}{article}{
 author               = {Masaki, Satoshi},
 eprint               = {arXiv:1301.1742},
 journal              = {preprint},
 title                = {On minimal non-scattering solution to focusing mass-subcritical nonlinear {S}chr\"odinger equation},
 date                 = {2013},
 }

\bib{Ma2}{article}{
    author = {Masaki, Satoshi},
     title = {A sharp scattering condition for focusing mass-subcritical nonlinear {S}chr\"odinger equation},
     date = {2015},
     issn = {1534-0392},
   journal = {Commun. Pure Appl. Anal.},
   volume = {14},
   number = {4},
  pages = {1481\ndash 1531},
  url = {http://dx.doi.org/10.3934/cpaa.2015.14.1481},
  review = {\MR{3359531}},      
}

\bib{Ma3}{article}{
 author               = {Masaki, Satoshi},
 eprint               = {arXiv:1605.09234},
 journal              = {preprint},
 title                = {Two minimization problems on non-scattering solutions to mass-subcritical nonlinear{S}chr\"odinger equation},
 date                 = {2016},
 }

\bib{MS}{article}{
    author = {Masaki, Satoshi},
    author = {Segata, Jun-ichi},
     title = {On the well-posedness of the generalized {K}orteweg--de
              {V}ries equation in scale-critical {$\hat L{}^r$}-space},
     date = {2016},
     issn = {2157-5045},
   journal = {Anal. PDE},
   volume = {9},
   number = {3},
  pages = {699\ndash 725},
  url = {http://dx.doi.org/10.2140/apde.2016.9.699},
  review = {\MR{3518534}},      
}

\bib{MR3194504}{article}{
    author = {Miyazaki, Hayato},
     title = {The derivation of the conservation law for defocusing nonlinear {S}chr\"odinger equations with non-vanishing initial data at infinity},
     date = {2014},
     issn = {0022-247X},
   journal = {J. Math. Anal. Appl.},
   volume = {417},
   number = {2},
  pages = {580\ndash 600},
  url = {http://dx.doi.org/10.1016/j.jmaa.2014.03.055},
  review = {\MR{3194504}},      
}

\bib{MR1991146}{article}{
    author = {Nakamura, Makoto},
    author = {Ozawa, Tohru},
     title = {Small data scattering for nonlinear {S}chr\"odinger wave and {K}lein-{G}ordon equations},
     date = {2002},
     issn = {0391-173X},
   journal = {Ann. Sc. Norm. Super. Pisa Cl. Sci. (5)},
   volume = {1},
   number = {2},
  pages = {435\ndash 460},
  review = {\MR{1991146}},      
}

\bib{NO}{article}{
    author = {Nakanishi, Kenji},
    author = {Ozawa, Tohru},
     title = {Remarks on scattering for nonlinear {S}chr\"odinger equations},
     date = {2002},
     issn = {1021-9722},
   journal = {NoDEA Nonlinear Differential Equations Appl.},
   volume = {9},
   number = {1},
  pages = {45\ndash 68},
  url = {http://dx.doi.org/10.1007/s00030-002-8118-9},
  review = {\MR{1891695}},      
}

\bib{pitaevskii1961vortex}{article}{
    author = {Pitaevskii, LP},
     title = {Vortex linex in an imperfect Bose gas},
     date = {1961},
   journal = {Sov. Phys. JETP},
   volume = {13},
  pages = {451},   
}

\bib{MR1696311}{book}{
    author = {Sulem, Catherine},
    author = {Sulem, Pierre-Louis},
     title = {The nonlinear {S}chr\"odinger equation},
     date = {1999},
     isbn = {0-387-98611-1},
    note = {Self-focusing and wave collapse},
 pages  = {xvi+350},
 publisher = {Springer-Verlag, New York},
 series = {Applied Mathematical Sciences},
   volume = {139},
   number = {},
  pages = {},
  url = {},
  review = {\MR{1696311}},      
}

\bib{Vi}{article}{
    author = {Vilela, M. C.},
     title = {Inhomogeneous {S}trichartz estimates for the {S}chr\"odinger equation},
     date = {2007},
     issn = {0002-9947},
   journal = {Trans. Amer. Math. Soc.},
   volume = {359},
   number = {5},
  pages = {2123\ndash 2136 (electronic)},
  url = {http://dx.doi.org/10.1090/S0002-9947-06-04099-2},
  review = {\MR{2276614 (2008a:35226)}},      
}

\bib{MR1831831}{book}{
    author = {Zhidkov, Peter~E.},
     title = {Korteweg-de {V}ries and nonlinear {S}chr\"odinger equations: qualitative theory},
     date = {2001},
     isbn = {3-540-41833-4},
   pages = {vi+147},
  publisher  = {Springer-Verlag, Berlin},
  series = {Lecture Notes in Mathematics},
  volume  = {1756},
  review = {\MR{1831831}},      
}

\end{biblist}
\end{bibdiv}

\end{document}